\documentclass[11pt,a4paper]{article}

\usepackage{epsf,epsfig,amsfonts,amsgen,amsmath,amstext,amsbsy,amsopn,amsthm
}
\usepackage{amsmath,times,mathptmx}
\usepackage{enumitem}
\usepackage{amsfonts,amsthm,amssymb}
\usepackage{amsfonts}
\usepackage{graphics}
\usepackage{latexsym,bm}
\usepackage{amsfonts,amsthm,amssymb,bbding}
\usepackage{indentfirst}
\usepackage{graphicx}
\usepackage{color}
\usepackage[colorlinks=true,anchorcolor=blue,filecolor=blue,linkcolor=blue,urlcolor=blue,citecolor=blue]{hyperref}
\usepackage{float}
\usepackage{tikz}

\DeclareMathAlphabet{\mathcal}{OMS}{cmsy}{m}{n}
\DeclareSymbolFont{largesymbols}{OMX}{cmex}{m}{n}
\allowdisplaybreaks[4]
\usepackage{moresize}

\newtheorem{thm}{Theorem}

\newtheorem{lemma}{Lemma}
\newtheorem{false statement}{False statement}

\theoremstyle{definition}
\newtheorem{definition}{Definition}
\newtheorem{claim}{Claim}

{Remark}

\newtheorem{casee}{Case}
\newtheorem{caseee}{Case}

\newtheorem{subcasee}{Subcase}[casee]
\newtheorem{subcaseee}{Subcase}[caseee]

\begin{document}
	
\title{Spectral extremal problems on planar and outerplanar graphs without $C_{k,l}$
\footnote{Supported by Tianshan Talent Training Program (No. 2024TSYCCX0013),
	Natural Science Foundation of Xinjiang Uygur Autonomous Region (No. 2024D01C41),
	 The Basic scientific research in universities of Xinjiang
	Uygur Autonomous Region (XJEDU2025P001) and NSFC (No. 12361071).}}
\author{{Jiamin Li, Dan Li\thanks{Corresponding author. E-mail: ldxjedu@163.com.}, Xilong Yin, Yuanyuan Chen}\\
{\footnotesize College of Mathematics and Systems Science, Xinjiang University, Urumqi 830046, China}}
\date{}
	
\maketitle {\flushleft\large\bf Abstract:}
Let $\emph{spex}_{\mathcal{P}}(n,F)$ and $\emph{spex}_{\mathcal{OP}}(n,F)$ be the maximum spectral radius among all $n$-vertex $F$-free planar graphs and outerplanar graphs, respectively. Define $C_{k,l}$  as a graph obtained from $C_k \cup C_l$ such that the two cycles share a common vertex, where  $l \ge k \ge 3$. 
In the 1990s, Cvetkovi\'c and Rowlinson conjectured $K_1 + P_{n-1}$ maximizes spectral radius in outerplanar graphs on $n$ vertices, while Boots and Royle (independently, Cao and Vince) conjectured $K_2 + P_{n-2} $ does so in planar graphs.
Tait and Tobin [J. Combin. Theory Ser. B, 2017] determined the fundamental structure as the key to confirming these two conjectures for sufficiently large $n$.  Recently, Yin and Li [Discrete Mathematics, 2026] characterized the extremal graphs for $\emph{spex}_{\mathcal{P}}(n,B_{t,l})$ and $\emph{spex}_{\mathcal{OP}}(n,B_{t,l})$ in planar and outerplanar graphs on the basis of this key idea, where $B_{t,l}$ denotes the graph obtained by  $t$ edge-disjoint $l$-cycles sharing a common vertex. 
In this paper, we focus on planar and outerplanar graphs without $C_{k,l}$, and  determine $\emph{spex}_{\mathcal{P}}(n,C_{k,l})$ and $\emph{spex}_{\mathcal{OP}}(n,C_{k,l})$  along with their unique extremal graphs for all $l \geq  k \geq 3$ and large $n$.

\vspace{0.1cm}
\begin{flushleft}
	\textbf{Keywords:} Spectral radius; Planar graph; Outerplanar graph; Cycles
\end{flushleft}
\textbf{AMS Classification:} 05C50; 05C35

\section{Introduction}
	 %A graph is planar if it can be embedded in the plane, that is, it can be drawn on the plane in such a way that edges intersect only at their endpoints. A graph is outerplanar if it can be embedded in the plane so that all vertices lie on the boundary of its outer face. Let $\mathcal{F}$ be a given family of graphs. If a graph $G$ does not contain any member of $\mathcal{F}$ as a subgraph, then $G$ is $\mathcal{F}$-free. If $\mathcal{F}={F}$, we write $G$ is $F$-free. Let $G = (V(G), E(G))$ be a graph, where $V(G)$ and $E(G)$ are the vertex set and the edge set of $G$. For two given graphs $G_1$ and $G_2$, the joint product of $G_1$ and $G_2$, denoted by $G_1+G_2$, is obtained from $G_1$ and $G_2$ by adding an edge between an arbitrary vertex in $G_1$ and an arbitrary vertex in $G_2$. The degree of $v$, denoted by $d_G(v)$, is the number of the edges which are incident with $v$. Let $A(G)$ be the adjacency matrix of $G$. The spectral radius of $G$, denoted by $\rho(G)$, is the largest modulus of all the eigenvalues of $A(G)$. Let $\boldsymbol{x} = (x_1, x_2, \ldots, x_n)^{\mathrm{T}}$ be an eigenvector of $A(G)$ corresponding to $\rho(G)$. 
	 
Let $G = (V(G), E(G))$ be a graph, where $V(G)$ and $E(G)$ are the vertex set and the edge set of $G$, respectively. A graph is planar if it admits a plane embedding with no edge crossings except at vertices, and outerplanar if such an embedding exists with all vertices on the outer face boundary. Let $\mathcal{F}$ be a family of graphs. A graph $G$ is said to be $\mathcal{F}$-free if it does not contain any member of $\mathcal{F}$ as a subgraph. If $\mathcal{F} = \{F\}$, we simply write that $G$ is $F$-free. Let $A(G)$ be the adjacency matrix of $G$. The spectral radius of $G$, denoted by $\rho(G)$, is the largest modulus of the eigenvalues of $A(G)$. Let $\emph{spex}_{\mathcal{P}}(n,F)$ and $\emph{spex}_{\mathcal{OP}}(n,F)$ be the maximum spectral radius among all $n$-vertex $F$-free planar graphs and outerplanar graphs, respectively. Denote the family of  $n$-vertex  $F$-free planar and outerplanar with maximum spectral radius by $\operatorname{SPEX}_{\mathcal{P}}(n,F)$ and $\operatorname{SPEX}_{\mathcal{OP}}(n,F)$, respectively. For a vertex $v \in V(G)$, $N_G(v)$ denotes the set of neighbors of $v$ in $G$, and $d_G(v)=|N_G(v)|$. For two given graphs $G_1$ and $G_2$, denote the $join$ of $G_1$ and $G_2$ by $G_1 + G_2$, where $join$ means the graph obtained from $G_1$ and $G_2$ by adding all possible edges between $V(G_1)$ and $V(G_2)$.  
%Let $\boldsymbol{x} = (x_1, x_2, \ldots, x_n)^{\mathrm{T}}$ be an eigenvector of $A(G)$ corresponding to $\rho(G)$. The Rayleigh quotient of $G$ is expressed as
%	$$	\rho(G) = \max_{\boldsymbol{x} \in \mathbb{R}^n} \frac{\boldsymbol{x}^{\mathrm{T}} A(G) \boldsymbol{x}}{\boldsymbol{x}^{\mathrm{T}} \boldsymbol{x}}
%		= \max_{\boldsymbol{x} \in \mathbb{R}^n} \frac{2 \sum_{uv \in E(G)} x_u x_v}{\boldsymbol{x}^{\mathrm{T}} \boldsymbol{x}},$$
%	where $\mathbb{R}^n$ is the set of $n$-dimensional real numbers.
%If $G$ is a connected graph with $n$ vertices, then by the well-known Perron-Frobenius theorem, there exists a unique positive vector $\boldsymbol{x} = \{x_1, \ldots, x_n\}^{\mathrm{T}}$ corresponding to $\rho(G)$. It is convenient for us to normalize $\boldsymbol{x}$ such that the maximum entry of $\boldsymbol{x}$ is 1. Usually, $\boldsymbol{x}$ is referred to as the Perron vector of $G$.

The \emph{Brualdi-Solheid problem} \cite{R.A. Brualdi}, a classic problem in spectral graph theory, seeks the graph with the maximum spectral radius within a given class. 
In 2010, Nikiforov \cite{V. Nikiforov-4} proposed a variation of Brualdi-Soheid problem which asks: given a graph $F$, what is the maximum spectral radius of an $F$-free graph of order $n$?
This problem has been extensively studied for various forbidden subgraphs, such as complete graphs \cite{B. Bollobás}, paths \cite{V. Nikiforov-4}, cycles \cite{L.F. Fang-2, V. Nikiforov-4, M.Q. Zhai-1},
matchings \cite{L.H. Feng}, wheels \cite{S. Cioab˘a-2,Y.H. Zhao}, star forests \cite{M.Z. Chen-1} and linear forests \cite{M.Z. Chen-2}. For more about this topic, we refer readers to surveys \cite{D. Cvetković,  Y.T. Li, V. Nikiforov-3}.

Spectral extremal problems on planar and outerplanar graphs have attracted wide attention. In 1978 Schwenk and Wilson \cite{A. J. Schwenk} asked the fundamental question: What can be said about the eigenvalues of a planar graph?
Subsequently, Boots and Royle \cite{B. N. Boots}, and independently Cao and Vince \cite{D. Cao},
conjectured that $K_2 + P_{n-2}$ is the unique planar graph on $n\ge 9$ vertices with maximum spectral radius.
For outerplanar graphs, Cvetkovi\'{c} and Rowlinson \cite{D. Cvetković} conjectured that
$K_1 + P_{n-1}$ maximizes the spectral radius.
These two landmark conjectures were confirmed for sufficiently large $n$ by Tait and Tobin \cite{M. Tait} in 2017,
and the Cvetkovi\'{c}--Rowlinson conjecture was later completely solved by Lin and Ning \cite{H.Q. Lin}.

Define $C_{k,l}$  be a graph obtained from $C_k \cup C_l$ such that the two cycles share a common vertex, where  $l \ge k \ge 3$. A theta graph $\theta_{a,b,c}$ is the union of three internally disjoint paths sharing the same pair of distinct endpoints, where $a,b,c$ denote the lengths of these paths.
%A Theta graph is obtained from a cycle $C_k$ by adding an additional edge between two non-consecutive vertices on $C_k$ , where $k \ge 4$.
In recent years, researchers have further extended the spectral extremal theory to planar and outerplanar graphs with forbidden subgraphs.
Fang, Lin and Shi \cite{L.F. Fang-2} determined $\emph{spex}_{\mathcal{P}}(n,tC_l)$ and characterize the unique extremal graph for $1 \le t \le 2, l \ge 3$ and sufficiently large $n$, where $tC_l$ denotes the disjoint union of $t$ copies of $l$-cycles.
Zhai and Liu \cite{M.Q. Zhai-3} characterized the maximum spectral radius and its unique extremal graph over all $n$ vertex planar graphs without $k$ edge-disjoint cycles. 
Zhang and Wang \cite{H.R. Zhang} studied the spectral extremal problems for $C_{l,l}$-free and Theta-free planar graphs.
Very recently, Yin and Li \cite{Yin2025,yin2026spectral} systematically studied spectral extremal problems for outerplanar and planar graphs
without paths, linear forests and cycles.

Previous research has predominantly focused on extremal problems forbidding cycles of equal length, such as single cycles or unions of cycles of the same order. In this paper, motivated by the above developments, we start focusing on two cycles of different lengths and completely determine the spectral extremal graphs for $C_{k,l}$-free planar and outerplanar graphs for all $l \ge k \ge 3$ and sufficiently large $n$.
%\vspace*{2mm}

\begin{definition}\cite{yin2026spectral}.\label{de1}
For positive integers $n, n_1, n_2$ with $n\geq n_1>n_2\geq 1$. Define $H_{\mathcal{P}}(n_1,n_2)$ as follows:
	\[
	H_{\mathcal{P}}(n_1,n_2)=
	\begin{cases}
		P_{n_1}\cup \frac{n-2-n_1}{n_2}P_{n_2}, & \text{if } n_2\mid (n-2-n_1);\\[6pt]
		P_{n_1}\cup \left\lfloor \frac{n-2-n_1}{n_2} \right\rfloor P_{n_2}\cup P_{n-2-n_1-\left\lfloor \frac{n-2-n_1}{n_2} \right\rfloor n_2}, & \text{otherwise.}
	\end{cases}
	\]
\end{definition}

\begin{definition}\cite{yin2026spectral}.\label{de2}
	For positive integers $n,n_1,n_2$ with $n\geq n_1>n_2 \geq 1$. Define $H_{\mathcal{OP}}(n_1, n_2)$ as follows:
	\[  
	H_{\mathcal{OP}}(n_1, n_2) = \begin{cases}   
		P_{n_1} \cup \frac{n-1-n_1}{n_2} P_{n_2}, & \text{if } n_2|(n-1-n_1); \\  
		P_{n_1} \cup \left\lfloor \frac{n-1-n_1}{n_2} \right\rfloor P_{n_2} \cup P_{n-1-n_1-\left\lfloor \frac{n-1-n_1}{n_2} \right\rfloor n_2}, & \text{otherwise.}  
	\end{cases}  
	\]
	
\end{definition}

Zhang and Wang\cite{H.R. Zhang} characterized $\operatorname{SPEX}_{\mathcal{P}}(n, C_{l,l}) = K_2 + H_{\mathcal{P}}(l-2,l-2)$. Furthermore, Yin and Li \cite{yin2026spectral} determined  $\operatorname{SPEX}_{\mathcal{P}}(n,$ $ B_{t,l})$ and $\operatorname{SPEX}_{\mathcal{OP}}(n,$ $ B_{t,l})$, where $B_{t,l} \cong C_{l,l}$ for $t=2$. Therefore, we consider the cycles of different lengths and restrict our attention to the case $l > k \ge 3$ throughout the paper. The main results of this paper are as follows.

\begin{thm}\label{thm1}
Let $l,k,n$ be positive integers with $l > k \ge 3$, and $n >\max\{ \frac{21k^3 - 159k^2 + 388k - 300}{4},5k^3-38k^2+92k-69, \frac{21l^3 - 255l^2 + 979l - 1145}{32}, \frac{361{\left\lfloor \frac{l-3}{2} \right\rfloor}^2-722{\left\lfloor \frac{l-3}{2} \right\rfloor}-425}{32} ,\frac{10}{9}|V(F)|, 2.67\times9^{17}, 10.2\times2^{{\left\lfloor \frac{l-3}{2} \right\rfloor}}+2\}$. Then $\operatorname{SPEX}_{\mathcal{P}}(n, C_{k,l}) = G$, where
\[G :\cong
\begin{cases}
K_2+H_{\mathcal{P}}(k-2,k-2), & \text{if } l \leq 2k-2; \\[4pt]
K_2+H_{\mathcal{P}}(l-2,k-2), & \text{if } l \in \{2k-1,2k\}; \\[4pt]
K_2+H_{\mathcal{P}}(\left\lceil \frac{l-3}{2} \right\rceil,\left\lfloor \frac{l-3}{2} \right\rfloor), & \text{if } l \geq 2k+1. \\[4pt]
\end{cases}\]

\end{thm}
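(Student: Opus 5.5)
We follow the Tait--Tobin framework as adapted by Zhang--Wang and Yin--Li. Let $G$ be an extremal graph in $\operatorname{SPEX}_{\mathcal{P}}(n,C_{k,l})$ with Perron vector $\boldsymbol{x}$, normalized so that $\max x_v=1$.

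\textbf{Step 1: coarse structure.} The candidate graph in the statement is $C_{k,l}$-free and contains $K_2+P$-type pieces. Hence $\rho(G)\ge \rho(K_2+H)\ge \frac{1+\sqrt{8n-15}}{2}-o(1)$, roughly $\sqrt{2n}$. We then run the standard sequence of estimates:
\begin{enumerate}[label=(\roman*)]
\item planarity gives $e(G)\le 3n-6$;
\item from this we obtain a small set $L$ of vertices with large eigenvector entries;
\item iterating the eigen-equation shows $|L|=2$, say $L=\{u_1,u_2\}$, with $x_{u_1},x_{u_2}$ close to $1$;
\item every other vertex has entry $O(1/\sqrt n)$;
\item $u_1$ and $u_2$ are adjacent to all of the remaining $n-2$ vertices.
\end{enumerate}
Step (v) is where the largeness assumptions on $n$ (the $9^{17}$ and $\frac{10}{9}|V(F)|$ terms) enter. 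The key point is that moving all edges of a vertex to $\{u_1,u_2\}$ keeps planarity. It also keeps the graph $C_{k,l}$-free once we know the residual structure, and it strictly increases $\boldsymbol{x}^T A\boldsymbol{x}$. Finally, $u_1u_2\in E(G)$, since adding this edge creates no new $C_{k,l}$ in a way that matters. So $G=K_2+H$, where $H$ is a graph on $n-2$ vertices. Planarity of $K_2+H$ forces $H$ to be a linear forest.

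\textbf{Step 2: reduction to a path-length condition.} We characterize when $K_2+H$ with $H$ a linear forest contains $C_{k,l}$. A path component with $p$ vertices, joined to $u_i$, gives cycles of every length from $3$ to $p+1$ through $u_i$. Cycles using both $u_1$ and $u_2$ reach length up to $p+2$, or combine two components. There are two ways to build $C_{k,l}$:
\begin{enumerate}[label=(\alph*)]
\item a $C_k$ through $u_1$ and a $C_l$ through $u_2$ in distinct components, which requires component sizes $\ge k-1$ and $\ge l-1$;
\item both cycles sharing a vertex of a path or the vertex $u_i$, using disjoint subpaths of one component. For example, $C_{k,l}$ centered at $u_1$ inside a single path needs $p\ge (k-1)+(l-1)$. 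A cycle through $u_1$ and $u_2$ can also be used, giving lengths up to $p+2$.
\end{enumerate}
Working these out, $K_2+H$ is $C_{k,l}$-free exactly when the component lengths satisfy roughly: at most one component has $\ge k-1$ vertices, it has at most $\max\{k-2,\ l-2 \text{ if } l\le 2k, \lceil (l-3)/2\rceil\}$ vertices with suitable parity constraints, and all others have at most $\min(k-2, \lfloor (l-3)/2\rfloor)$ vertices. The three regimes $l\le 2k-2$, $l\in\{2k-1,2k\}$, and $l\ge 2k+1$ arise from comparing $l-2$ against $2(k-2)+\dots$ and against a long path of length $\lceil (l-3)/2\rceil$, which creates a $C_l$ through $u_1u_2$ paired with short cycles.

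\textbf{Step 3: optimization over linear forests.} Among admissible $H$, we maximize $\rho(K_2+H)$. We use the lemma from Yin--Li and Zhang--Wang that, for fixed $n$, $\rho(K_2+H)$ increases when path components are merged into longer ones. More precisely, the comparison is governed by $\sum_{\text{components}}$ of a quantity increasing in path length, with error $O(n^{-2})$. This shows that the optimum takes as many maximal-allowed components as possible, plus one remainder, which is precisely $H_{\mathcal{P}}(n_1,n_2)$. The lower-order terms decide between competing configurations: in regime 3, one long path of $\lceil (l-3)/2\rceil$ versus a path of $l-2$ with short companions. These need the explicit polynomial bounds on $n$ in $k$ and $l$, and the bound $10.2\cdot 2^{\lfloor (l-3)/2\rfloor}$.

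\textbf{Main obstacle.} We expect Step 2 together with the comparison in Step 3 to be the hardest part: exhaustively classifying $C_{k,l}$ embeddings in $K_2+H$, especially cycles passing through both dominating vertices, and then comparing the spectral radii of near-optimal forests with different component profiles. We would first attempt this through a walk-count expansion, $\rho^2-\rho\approx 2(n-2)+2e(H)/\ldots$, under which the extremal forest maximizes $e(H)$ and ties are broken by second-order terms.
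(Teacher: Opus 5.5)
Your architecture matches the paper's: the Tait--Tobin/Wang structure lemma, reduction to $G=K_2+H$ with $H$ a linear forest, then path transformations. However, Step~2, which you rightly call the crux, is wrong as stated, and Step~3 applied to it would return the wrong graphs.

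Put $m=\min\{k-2,\lfloor\frac{l-3}{2}\rfloor\}$. Your criterion says at most one component has order $\ge k-1$ and all others have order $\le m$.
\begin{itemize}
\item For $l\ge 2k+1$ we have $\lfloor\frac{l-3}{2}\rfloor\ge k-1$, so $m=k-2$. But every component of $H_{\mathcal P}(\lceil\frac{l-3}{2}\rceil,\lfloor\frac{l-3}{2}\rfloor)$ except possibly the remainder has at least $k-1$ vertices. Your criterion therefore excludes the claimed extremal graph.
\item For $l\le 2k-2$ we have $m=\lfloor\frac{l-3}{2}\rfloor\le k-3$, so your criterion also excludes $H_{\mathcal P}(k-2,k-2)$.
\item Only for $l\in\{2k-1,2k\}$ does your class contain the right graph.
\end{itemize}
Optimizing over your class gives about $(n-2)/m$ components instead of $(n-2)/\max\{k-2,\lfloor\frac{l-3}{2}\rfloor\}$. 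That means $\Theta(n)$ fewer edges in $H$ and a strictly smaller spectral radius.

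Your account of the third regime is also backwards. $K_2+H_{\mathcal P}(\lceil\frac{l-3}{2}\rceil,\lfloor\frac{l-3}{2}\rfloor)$ contains \emph{no} $C_l$. A cycle of $K_2+H$ meets at most two components of $H$, so it has length at most $n_1+n_2+2=l-1$. The graph does contain many $C_k$'s, which is harmless. Your case (a) is not a $C_{k,l}$ either: a $C_k$ through $u_1$ only and a $C_l$ through $u_2$ only, in distinct components, are vertex-disjoint.

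The missing idea is that there are two \emph{alternative} ways to be $C_{k,l}$-free, so the admissible family is a union, not an intersection of constraints.
\begin{itemize}
\item Either the bulk components have order at most $k-2$, so every $C_k$ or $C_l$ built from them passes through both $u'$ and $u''$. One extra path of order $l-2$ is then affordable exactly when two $P_{k-2}$'s cannot close a $C_l$ through $u'$ and $u''$, i.e.\ $2(k-2)<l-2$, i.e.\ $l\ge 2k-1$.
\item Or $K_2+H$ is $C_l$-free, i.e.\ $n_1+n_2\le l-3$, with bulk order $\lfloor\frac{l-3}{2}\rfloor$.
\end{itemize}
The three regimes come from comparing $k-2$ with $\lfloor\frac{l-3}{2}\rfloor$: the bulk order is their maximum, not their minimum.

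The paper also never needs an exact characterization. It proceeds as follows:
\begin{itemize}
\item It certifies competitors via the sufficient condition of Claim 4, or via $C_l$-freeness.
\item It forces $n_3\ge k-2$ (resp.\ $n_3\ge\lfloor\frac{l-3}{2}\rfloor$) by a global count: $O(k)$ deleted edges against $\Theta(n)$ added edges, weighted through Lemma 2's bounds $\frac{2}{\rho}\le x_v\le\frac{2}{\rho}+\frac{4.496}{\rho^2}$. Your heuristic walk-count expansion would need to be replaced by such quantitative entry bounds.
\item It then bounds $n_1,n_2$ by exhibiting explicit copies of $C_{k,l}$.
\end{itemize}

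Finally, in Step~1 you add $u_1u_2$ before showing $H$ is acyclic. If $G[R]$ is a Hamiltonian cycle, $G$ is a planar bipyramid and $G+u_1u_2$ is not planar. This case must first be excluded using $C_{k,l}$-freeness, as in Claim 1.
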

\vspace*{2mm}

\begin{thm}\label{thm2}
	Let $l,k,n$ be positive integers with $l > k \ge 3$ and $n > \max \{\frac{(k-2)(2l^2 - 14l + 3kl - 6k + 19)}{l - k},$ $400, \frac{5}{4} |V(F)|,5 \times 2^{l} + 1\} $. Then $\operatorname{SPEX}_{\mathcal{OP}}(n, C_{k,l}) = G$, where $G \cong K_1 + H_{\mathcal{OP}}(l-2,l-2).$
\end{thm}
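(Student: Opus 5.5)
Theorem~\ref{thm2} is the last statement, so I plan its proof; the strategy is the Tait--Tobin framework as adapted by Yin and Li. Let $G^*\in \operatorname{SPEX}_{\mathcal{OP}}(n,C_{k,l})$ and let $\boldsymbol{x}$ be its Perron vector, normalized so that $\max_v x_v = x_{u^*}=1$.

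\textbf{Lower bound.} First I check that $K_1+H_{\mathcal{OP}}(l-2,l-2)$ is outerplanar and $C_{k,l}$-free. Every cycle of $K_1+H_{\mathcal{OP}}(l-2,l-2)$ lies inside $K_1+P_{l-2}$, which has only $l-1$ vertices and so contains no $C_l$. Comparing with $K_1+P_{n-1}$ then gives $\rho(G^*)\ge \sqrt{n-1}$ up to lower-order terms.

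\textbf{Structure of $G^*$.} Next I use outerplanarity, $e(G^*)\le 2n-3$, and the eigen-equations $\rho x_v=\sum_{u\sim v}x_u$ and $\rho^2x_v=\sum_{w}(\#\text{walks of length }2)\,x_w$, in the standard way.
\begin{itemize}
\item There is a vertex $u^*$ adjacent to all but $O(1)$ vertices.
\item Every other vertex has $x_v=O(1/\sqrt n)$.
\item Since $G^*$ is extremal, $u^*$ is in fact dominating; the thresholds $n>5\times 2^l+1$ and $\frac54|V(F)|$ are the kind of constants that arise here.
\end{itemize}
Then $G^*-u^*$ is a linear forest, because outerplanarity of $K_1+(G^*-u^*)$ forces $G^*-u^*$ to have maximum degree at most $2$ and to contain no cycle.

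\textbf{Reducing to a path-length problem.} Let $G^*-u^*=P_{a_1}\cup\cdots\cup P_{a_s}$. Any cycle of $G^*$ is either $u^*$ together with a subpath of some $P_{a_i}$, or lies in a single component, which is impossible since the components are paths. So a $C_k$ and a $C_l$ sharing a vertex must both pass through $u^*$. They share only that vertex exactly when they use disjoint subpaths of lengths $k-1$ and $l-1$ vertices, either in different components or in the same component.

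\begin{itemize}
\item A component with $a_i\ge l-1$ contains a $C_l$ through $u^*$. Since $n$ is large, there are then many vertices outside that component, giving a disjoint $C_k$ elsewhere unless almost all components are shorter than $k-1$.
\item Conversely, if all components have $a_i\le l-2$, two vertex-disjoint paths with $l-1$ and $k-1$ vertices must be found inside a single component. That is impossible when $a_i\le l-2$, so $K_1+H_{\mathcal{OP}}(l-2,l-2)$ is free.
\end{itemize}

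\textbf{Comparison of candidates.} The feasible structures are therefore:
\begin{enumerate}
\item all $a_i\le l-2$; or
\item a single component of length at least $l-1$, with the rest in a $(k-2)$-bounded family, and that long component also avoiding $k+l-2$ consecutive vertices.
\end{enumerate}
I then compare spectral radii. Merging paths increases $\rho$, and for $K_1+$ linear forest one has $\rho^2\approx n-1+\frac{2e(G-u^*)}{\sqrt{n}}\cdot$(correction). So the count of edges of $G^*-u^*$ dominates, which is $n-1$ minus the number of components. Option 1 loses about $\frac{n}{l-2}$ edges. Option 2 loses about $\frac{n}{k-2}$ edges, which is more since $k<l$. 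Hence option 1 wins. Among forests with all parts of size at most $l-2$, a standard path-merging lemma (the form used by Yin and Li) shows that the extremal choice is as many $P_{l-2}$ as possible plus one remainder, i.e.\ $H_{\mathcal{OP}}(l-2,l-2)$.

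\textbf{Main obstacle.} I expect the hardest step to be the rigorous version of the comparison: bounding $\rho$ precisely enough to separate forests whose edge counts differ by $\Theta(n/(k-2)-n/(l-2))$, and handling the single remainder component. I would try a Rayleigh-quotient estimate using $x_v\approx 1/\sqrt n$ for $v\ne u^*$, together with the explicit threshold $n>\frac{(k-2)(2l^2-\cdots)}{l-k}$.
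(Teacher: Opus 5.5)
Your proposal is correct and follows essentially the paper's route. The paper cites Lemma~\ref{lm8} for the dominating vertex instead of rederiving it, concludes that $G^*-u^*$ is a linear forest, and then uses your observation that $C_{k,l}\subseteq K_1+H$ iff $H$ has vertex-disjoint subpaths on $l-1$ and $k-1$ vertices (Claims~\ref{claim9}--\ref{claim10}). It rules out the ``one long path plus $(k-2)$-bounded paths'' case by a Rayleigh-quotient comparison with $K_1+H_{\mathcal{OP}}(l-2,l-2)$, and finishes with the $(s_1,s_2)$-transformation of Lemma~\ref{lm10}.

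The one step to tighten is the eigenvector bound. Your stated bound $x_v=O(1/\sqrt n)$ is too weak for that comparison, because repacking the short paths into copies of $P_{l-2}$ can delete $\Theta(n)$ edges, for example when $2(k-2)>l-2$. What you need is the two-sided estimate $x_v\in[1/\rho,\,1/\rho+O(1/\rho^2)]$ of Lemma~\ref{lm9}. It follows at once from $u^*$ being dominating and $\Delta(G^*-u^*)\le 2$, which together force $x_{u^*}=1$.
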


\vspace*{2mm}
\section{Proof of Theorem \ref{thm1}}\label{sec2}

Let $G$ be a connected graph of order $n$, by Perron-Frobenius theorem, then there exists a positive eigenvector $X=(x_1,...,x_n)^T$ corresponding to $\rho(G)$.
Now let $G$ be an extremal graph for $\textit{spex}_\mathcal{P}(n,F)$. For convenience, we now normalize $X$ such that its maximum entry equals $1$. Before proceeding, we first list several useful lemmas.
%\vspace*{2mm}
\begin{lemma}\cite{Wang2025}.\label{lm1}  
Let $F$ be a planar graph not contained in $K_{2,n-2}$, where $n \geq \max\{2.67 \times 9^{17}, \frac{10}{9}|V(F)|\}$.  
Suppose that $G$ is a connected extremal graph in $\operatorname{SPEX}_{\mathcal{P}}(n, F)$ and
$X$
is the positive eigenvector of $\rho := \rho(G)$ with $\max_{v \in V(G)} x_{v} = 1$.   
Then there exist two vertices $u',u''\in V( G)$ such that $R:=N_G(u')\cap N_G( u'')=V( G)\setminus\{u', u''\}$ and $x_{u'}= x_{u''}= 1.$ In particular, $G$ contains a copy of $K_{2, n-2}.$

\end{lemma}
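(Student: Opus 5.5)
This lemma is quoted from \cite{Wang2025}; the plan below is how I would reprove it, following the Tait--Tobin strategy for planar spectral extremal graphs. Let $G\in\operatorname{SPEX}_{\mathcal{P}}(n,F)$ with Perron vector $X$, $\max x_v=1$, attained at $u'$.

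\textbf{Step 1: lower bound on $\rho$.} Since $F\not\subseteq K_{2,n-2}$, the graph $K_{2,n-2}$ is $F$-free and planar. Hence
\[
\rho\ge\rho(K_{2,n-2})=\sqrt{2(n-2)}.
\]

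\textbf{Step 2: upper bound on $\rho$.} Planarity gives $e(G)\le 3n-6$. Combining this with the Hong-type bound $\rho\le\sqrt{2e-n+1}$ gives $\rho\le\sqrt{5n}$, so $\rho=\Theta(\sqrt n)$.

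\textbf{Step 3: few vertices of large weight.} Fix a small $\varepsilon$. Let $L=\{v: x_v\ge\varepsilon\}$. Summing $\rho x_v=\sum_{w\sim v}x_w$ over $v\in L$ and using $e(G)\le 3n-6$, together with the second-order identity
\[
\rho^2x_v=\sum_{w\sim v}\sum_{z\sim w}x_z,
\]
I expect to show $|L|=O(\sqrt n)$ and that every $v\in L$ has degree at least $c\,x_v n$.

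\textbf{Step 4: exactly two dominant vertices.} The identity $\rho^2x_{u'}\approx 2n$ at $u'$ should force a second vertex $u''$ with $|N(u')\cap N(u'')|\ge(1-\delta)n$. Planarity (no $K_{3,3}$) prevents a third vertex from having a large common neighbourhood with both $u'$ and $u''$. Bootstrapping the eigen-equations then gives $x_{u''}\ge 1-\delta$, and $x_v=O(1/\sqrt n)$ for every $v\neq u',u''$.

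\textbf{Step 5: the exact conclusion.} Suppose some $v\notin\{u',u''\}$ misses $u'$ or $u''$. Delete the at most $O(1)$ edges from $v$ to low-weight vertices, and add the missing edges $vu'$ and $vu''$. This should keep the graph planar, using an embedding in which the common neighbours sit between $u'$ and $u''$. It should also keep the graph $F$-free, since the local structure near $K_{2,n-2}$ is controlled and $n\ge \frac{10}{9}|V(F)|$ leaves room. By the Rayleigh quotient the spectral radius strictly increases, since the edge gain is about $x_v(x_{u'}+x_{u''})\approx 2x_v$ and the loss is $O(x_v/\sqrt n)$. This contradicts extremality, so $R=V(G)\setminus\{u',u''\}$. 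Finally, symmetry of $u'$ and $u''$ gives $x_{u'}=x_{u''}=1$: if $x_{u''}<x_{u'}$, move the neighbours of $u''$ inside $R$ to $u'$, which would increase $\rho$.

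\textbf{Main obstacle.} The hardest part is Step 5. Keeping $F$-freeness after edge switching is delicate, because adding edges to $u'$ and $u''$ could create a copy of $F$. I would handle it by rerouting: every added edge can be simulated inside the already present near-$K_{2,n-2}$ structure, using spare common neighbours. This is where the constants $2.67\times 9^{17}$ and $\frac{10}{9}|V(F)|$ enter.
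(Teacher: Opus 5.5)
The paper gives no proof of this lemma; it imports it from the cited source. That source's proof follows the Tait--Tobin scheme of your Steps 1--4, which you only sketch. Note that the constant $2.67\times 9^{17}$ comes from the $\varepsilon$-bookkeeping in those steps, not from Step 5.

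The genuine gap is the $F$-freeness part of Step 5. ``Simulating an added edge inside the near-$K_{2,n-2}$ structure'' is not a valid mechanism, because subgraph containment does not let you replace an edge of a copy of $F$ by a path. Nothing else in your sketch stops a copy of $F$ from using the new edges $vu',vu''$. The missing idea is a \emph{vertex} substitution:
\begin{itemize}
\item Your $G^*$ satisfies $N_{G^*}(v)=\{u',u''\}$ and $G^*-v=G-v$. This depends on deleting \emph{every} edge from $v$ to a vertex other than $u',u''$.
\item Hence any copy of $F$ in $G^*$ contains $v$, and uses $v$ only through edges to $u'$ and $u''$.
\item Since $v\notin R:=N_G(u')\cap N_G(u'')$, the copy meets $R$ in at most $|V(F)|-1$ vertices.
\item So if $|R|\ge |V(F)|$, some $w\in R$ is unused. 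Replacing $v$ by $w$ gives $F\subseteq G$, a contradiction.
\end{itemize}
This is where $n\ge\frac{10}{9}|V(F)|$ is used, once Step 4 gives $|R|\ge\frac{9}{10}n$.

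The rest of Step 5 also needs repair.

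First, ``at most $O(1)$ edges'' is not justified for an arbitrary $v$, so $v$ must be chosen. Let $R_1:=V(G)\setminus(R\cup\{u',u''\})$.
\begin{itemize}
\item Each vertex other than $u',u''$ has at most two neighbours in $R$; three would give a $K_{3,3}$ with $u',u''$.
\item The planar graph $G[R_1]$ has a vertex $v$ with $d_{R_1}(v)\le 5$.
\item If, say, $v\not\sim u'$, then $\rho x_v\le x_{u''}+7\max_{w\neq u',u''}x_w<x_{u'}+x_{u''}$.
\end{itemize}
This is exactly the inequality you need, since $X^T(A(G^*)-A(G))X=2x_v(x_{u'}+x_{u''}-\rho x_v)$.

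Second, planarity of $G^*$ is not automatic. If $G[R]$ contained a cycle through all of $R$, then $G-v\supseteq\overline{K_2}+C_{|R|}$. Adding the path $u'vu''$ would then create a $K_5$-minor. You must exclude this configuration or change the construction.

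Finally, $x_{u'}=x_{u''}$ does not come from ``moving neighbours'', since both vertices are already adjacent to all of $R$. Instead, subtract the two eigen-equations. They differ only in the term for the possible edge $u'u''$, so you get $(\rho+\varepsilon)(x_{u'}-x_{u''})=0$ with $\varepsilon\in\{0,1\}$.
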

%\vspace*{2mm}
\begin{lemma}\cite{Wang2025}.\label{lm2}
	Suppose further that G contains $K_{2, n- 2}$ as a subgraph. Let $u', u''$ be the two vertices of $G$ that have degree $n-2$ in $K _{2, n- 2}$.
	For any vertex $u\in V( G) \setminus \{ u', u''\} $, we have 
	$$\frac 2\rho \leq x_u\leq \frac 2\rho + \frac {4. 496}{\rho ^2}.$$
\end{lemma}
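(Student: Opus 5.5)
The plan is to read both bounds off the eigenvalue equation $\rho x_u=\sum_{w\in N_G(u)}x_w$ at a vertex $u\in R=V(G)\setminus\{u',u''\}$. The input is the normalization from Lemma \ref{lm1}: $x_{u'}=x_{u''}=1$ and $\max_v x_v=1$. I assume we are in that setting, with $u',u''$ the two vertices supplied by Lemma \ref{lm1}.

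\emph{Lower bound.} Every $u\in R$ is adjacent to both $u'$ and $u''$. All other terms in the eigen-equation are nonnegative, so
$$\rho x_u\ \ge\ x_{u'}+x_{u''}\ =\ 2,$$
which gives $x_u\ge 2/\rho$.

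\emph{Structural step.} For the upper bound, the key claim is that every vertex of $G[R]$ has at most two neighbours inside $R$. Suppose $w\in R$ had three neighbours $a,b,c\in R$. Then the parts $\{u',u'',w\}$ and $\{a,b,c\}$ span a copy of $K_{3,3}$, because $u'$ and $u''$ are adjacent to all of $R$. This contradicts planarity.

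\emph{Upper bound.} Since $u$ may also be adjacent to at most one of $u',u''$ beyond what was already counted (these contribute at most $1$ each, already included), the eigen-equation gives, for each $u\in R$,
$$\rho x_u\ \le\ 2+\sum_{w\in N_G(u)\cap R}x_w\ \le\ 2+2M,\qquad M:=\max_{w\in R}x_w.$$
Applying this at a vertex of $R$ where $M$ is attained gives $\rho M\le 2+2M$, so $M\le 2/(\rho-2)$. Feeding this back in,
$$x_u\ \le\ \frac{2}{\rho}+\frac{4}{\rho(\rho-2)}\ =\ \frac{2}{\rho}+\frac{4}{\rho^2}\cdot\frac{\rho}{\rho-2}.$$

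\emph{Numerical step.} It remains to check $4\rho/(\rho-2)\le 4.496$, that is $\rho/(\rho-2)\le 1.124$. This is equivalent to $\rho\ge 2.248/0.124\approx 18.13$. Because $G\supseteq K_{2,n-2}$, we have $\rho\ge\sqrt{2(n-2)}$, which is enormous under the hypothesis $n\ge 2.67\times 9^{17}$.

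I expect the only real content to be the structural step ruling out degree $3$ in $G[R]$ via $K_{3,3}$. The rest is a one-step bootstrap through the eigen-equation. If the constant $4.496$ were meant to be tight for smaller $\rho$, one could iterate the bootstrap once more, but with $\rho$ this large the single iteration already suffices.
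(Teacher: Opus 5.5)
Your proof is correct. The paper gives no argument for this statement: it quotes the lemma from \cite{Wang2025} and uses it as a black box. There is therefore no in-paper proof to compare yours against, and your derivation stands on its own.

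Each step checks out:
\begin{itemize}
\item The lower bound comes from the two edges $uu'$ and $uu''$.
\item The $K_{3,3}$ on $\{u',u'',w\}\cup\{a,b,c\}$ gives $d_R(w)\le 2$.
\item The fixed-point inequality $\rho M\le 2+2M$ gives $M\le 2/(\rho-2)$.
\item The arithmetic $\rho/(\rho-2)\le 1.124\iff\rho\ge 2.248/0.124\approx 18.13$ is right. Since $\rho\ge\sqrt{2n-4}$, this already holds for $n\ge 167$.
\end{itemize}

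There are two small comments.

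First, the sentence about $u$ being adjacent to ``at most one of $u',u''$ beyond what was already counted'' is muddled. Outside $R$, the neighbours of $u$ are exactly $u'$ and $u''$. They contribute exactly $x_{u'}+x_{u''}=2$, which is what your displayed inequality actually uses.

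Second, you take $x_{u'}=x_{u''}=1$ from Lemma~\ref{lm1}, but this follows from the $K_{2,n-2}$ structure alone.
\begin{itemize}
\item Since $N_G(u')\setminus\{u''\}=N_G(u'')\setminus\{u'\}=R$, subtracting the eigen-equations at $u'$ and $u''$ gives $(\rho+\epsilon)(x_{u'}-x_{u''})=0$, where $\epsilon\in\{0,1\}$ records whether $u'u''\in E(G)$. Hence $x_{u'}=x_{u''}$.
\item Your bootstrap, run with $x_{u'}=x_{u''}$ unnormalized, gives $\max_{w\in R}x_w\le 2x_{u'}/(\rho-2)<x_{u'}$ for $\rho>4$. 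So the maximum entry sits at $u'$ and $u''$.
\end{itemize}

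Consequently, your argument uses neither the extremality of $G$ nor $F$-freeness. The two-sided bound therefore holds for every planar graph on $n\ge 167$ vertices that contains $K_{2,n-2}$. It also gives a sharper second-order coefficient, $4\rho/(\rho-2)=4+O(1/\rho)$, in place of $4.496$.
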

%\vspace*{2mm}
\begin{definition}\cite{Wang2025}.\label{de3}
	Let $s_1$ and $s_2$ be two integers with $s_1\geq s_2\geq1$, and let $H=P_{s_1}\cup P_{s_2}\cup H_0$, where $H_0$ is a disjoint union of paths. We say that $H^*$ is an $(s_1,s_2)$-transformation of $H$ if
	$$H^*:=\begin{cases}P_{s_1+1}\cup P_{s_2-1}\cup H_0&\text{if }s_2\geq2,\\P_{s_1+s_2}\cup H_0&\text{if }s_2=1.\end{cases}$$
	Clearly, $H^*$ is a disjoint union of paths, which implies that $K_2 + H^*$ is planar.
\end{definition}
%\vspace*{2mm}
\begin{lemma}\cite{Wang2025}.\label{lm3}
Let $H$ and $H^*$ be the two graphs as shown in Definition \ref{de3}.
When $n\geq\max\{2.67\times9^{17},10.2\times2^{s_2}+2\}$, we have $\rho(K_{2} + H^*) > \rho(K_{2} + H)$.
\end{lemma}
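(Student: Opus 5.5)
The plan is to avoid a direct Rayleigh-quotient comparison, because it is too crude here. Passing from $H$ to $H^*$ (for $s_2\ge 2$) deletes an edge at the end of $P_{s_2}$ and adds an edge at the end of $P_{s_1}$. By Lemma \ref{lm2}, every vertex of $H$ has entry $2/\rho+O(\rho^{-2})$. At order $\rho^{-2}$ an endpoint of $P_{s_1}$ (degree $1$ in $H$) can have a \emph{smaller} entry than an interior vertex of $P_{s_2}$, so the first-order change $x^T(A^*-A)x$ does not have an obvious sign. Instead I will reduce the comparison to a walk-generating-function identity for joins.

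\textbf{Step 1 (secular equation).} Write $G=K_2+H$ and let $\epsilon\in\{0,1\}$ indicate whether $u'u''$ is an edge. By symmetry $x_{u'}=x_{u''}=a$. The eigen-equations on $V(H)$ give $(\rho I-A_H)x_H=2a\mathbf 1$. Since $\rho>\rho(H)$, this yields $x_H=2a(\rho I-A_H)^{-1}\mathbf 1$. Substituting into $\rho a=\epsilon a+\mathbf 1^Tx_H$ gives
\[
\rho-\epsilon=2W_H(\rho),\qquad W_H(t):=\mathbf 1^T(tI-A_H)^{-1}\mathbf 1=\sum_{j\ge0}\frac{w_j(H)}{t^{j+1}},
\]
where $w_j(H)$ is the number of walks of length $j$ in $H$. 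On $t>\rho(H)$ the function $t-\epsilon-2W_H(t)$ is strictly increasing, and $\rho(K_2+H)$ is its unique root there. Hence, if $W_{H^*}(t)>W_H(t)$ at $t=\rho(K_2+H)$, then $\rho(K_2+H^*)>\rho(K_2+H)$. Moreover $W$ is additive over components, so only the two modified paths matter.

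\textbf{Step 2 (reduction to paths).} Let $g_s(t)=W_{P_s}(t)$ and $f_s=g_{s+1}-g_s$.
\begin{itemize}
\item Case $s_2=1$: we have $H^*\supset H$ as a spanning supergraph, so the strict inequality is immediate, and even the Rayleigh argument works.
\item Case $s_2\ge2$: we need $g_{s_1+1}+g_{s_2-1}>g_{s_1}+g_{s_2}$, i.e. $f_{s_1}>f_{s_2-1}$. Since $s_1\ge s_2>s_2-1$, it suffices to show that $f_s$ is strictly increasing in $s$.
\end{itemize}

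\textbf{Step 3 (monotonicity of $f_s$, the main obstacle).} Expanding in walks, $f_s(t)$ counts with weight $t^{-(j+1)}$ the walks in $P_{s+1}$ that meet the new end vertex. Comparing $f_{s+1}$ with $f_s$ is a local question near the far end of the path. The walks seen by $f_{s+1}$ but not by $f_s$ must travel from the new end to the opposite end and back, so the leading difference has order $t^{-(s+1)}$ to $t^{-(2s+1)}$ and is positive.

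To make this rigorous I would compute $g_s$ in closed form from the Chebyshev-type transfer recursion for paths: with $t=\mu+\mu^{-1}$, one has $g_s=\frac{s}{t-2}-\frac{2\mu(1-\mu^{s})}{(t-2)^2(1+\mu^{s+1})}$ or similar. From this I would derive $f_{s+1}-f_s\ge c\,\mu^{s}$ minus an error term. With $\mu\approx 1/t$ and $t=\rho\approx\sqrt{2n}$, the crude bound from Lemma \ref{lm2} together with $\rho\ge\sqrt{2(n-2)}$ gives
\[
\rho\ge\sqrt{2(n-2)}>\text{const}\cdot 2^{s_2/2},
\]
which is where the hypothesis $n\ge10.2\cdot 2^{s_2}+2$ enters. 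This makes the geometric tail of alternating corrections smaller than the leading positive term.

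If the closed form becomes messy, my fallback is a direct walk-injection argument. I would pair each walk counted in $f_{s_2-1}$ with a walk in $f_{s_1}$ by reflecting it at the far end. Then I would show that the excess walks (those reaching distance about $s_2$) contribute at least $t^{-2s_2}$. Finally, I would bound the count of the ``bad'' walks by $2^{j}$ per length, which again produces the $2^{s_2}$ threshold.
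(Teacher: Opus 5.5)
Steps 1 and 2 of your proposal are correct. The paper gives no proof of this lemma; it only imports it by citation, so there is nothing in the text to compare your route against. The secular equation $\rho-\epsilon=2W_H(\rho)$, the strict monotonicity of $t-\epsilon-2W_H(t)$ for $t>2>\rho(H),\rho(H^*)$, additivity of $W$ over components, the trivial case $s_2=1$, and the reduction of the case $s_2\ge 2$ to $f_{s_1}>f_{s_2-1}$ all hold.

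\textbf{The gap is Step 3.} After your reduction this step is the entire content of the lemma, and you only sketch it.
\begin{itemize}
\item The closed form you write is wrong. With $\mu+\mu^{-1}=t$, the correct one is $g_s=\frac{s}{t-2}-\frac{2\mu^2}{(1-\mu)^3}\cdot\frac{1-\mu^s}{1+\mu^{s+1}}$.
\item The bound ``$f_{s+1}-f_s\ge c\mu^s$ minus an error term'' is never derived.
\item Your account of how $n\ge 10.2\cdot 2^{s_2}+2$ controls an ``alternating tail'' fails even heuristically. Suppose there were sign-indefinite corrections of up to $2^j$ walks at each length $j$, starting just after the leading term $2t^{-(s_2+1)}$. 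Their relative size would be about $2^{s_2}/t$. That would force $\rho\gtrsim 2^{s_2}$, i.e.\ $n\gtrsim 4^{s_2}$, not $\rho\gtrsim 2^{s_2/2}$.
\item The reflection pairing with ``bad walks'' in your fallback is not carried out.
\end{itemize}

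\textbf{How to close it.} There is nothing to control: the difference is an exact positive walk count, which your own walk interpretation already gives. Label $P_m$ as $1,2,\dots,m$. Then $f_s(t)=\sum_j t^{-(j+1)}$ times the number of length-$j$ walks in $P_{s+1}$ that visit vertex $s+1$. The shift $i\mapsto i+1$ maps these bijectively onto the length-$j$ walks in $P_{s+2}$ that visit $s+2$ but avoid vertex $1$. Let $N_j$ be the number of length-$j$ walks in $P_{s+2}$ that visit both $1$ and $s+2$. Then
\[
f_{s+1}(t)-f_s(t)=\sum_{j\ge 0}\frac{N_j}{t^{j+1}}\ \ge\ \frac{2}{t^{s+2}}>0\qquad (t>2).
\]
Equivalently, from the correct closed form,
\[
f_s=\frac{1}{t-2}-\frac{2\mu^2(1+\mu)}{(1-\mu)^2}\cdot\frac{\mu^s}{(1+\mu^{s+1})(1+\mu^{s+2})},
\]
and $y\mapsto y/\bigl((1+\mu y)(1+\mu^2 y)\bigr)$ is increasing on $(0,1)$.

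With this inserted, your argument proves $\rho(K_2+H^*)>\rho(K_2+H)$ whenever $\rho(K_2+H)>2$. That holds for all $n\ge 5$, since $\rho(K_2+H)\ge\sqrt{2(n-2)}$, so neither $2.67\times 9^{17}$ nor $10.2\times 2^{s_2}+2$ is needed. Your route is therefore sound and yields a stronger statement than the cited lemma. As submitted, however, the decisive step is missing and the justification you give for it is incorrect.
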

%\vspace*{2mm}
\begin{lemma}\cite{Q. Li}.\label{lm4}
Let $G$ be a connected graph, and let $G'$ be a proper spanning subgraph of $G$. Then $\rho(G) > \rho(G')$.
\end{lemma}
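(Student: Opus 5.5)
The statement is Lemma~\ref{lm4}: a proper spanning subgraph of a connected graph has strictly smaller spectral radius. I would prove it from the Perron--Frobenius theorem together with the Rayleigh quotient characterization
$$\rho(M)=\max_{y\neq 0}\frac{y^{T}My}{y^{T}y},$$
valid for any real symmetric matrix $M$.

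Since $G'$ is a spanning subgraph of $G$, the two graphs share the vertex set and $A(G')\le A(G)$ entrywise. Because $G'$ is proper, there is at least one edge $uv\in E(G)\setminus E(G')$. Let $y$ be a unit eigenvector of $A(G')$ for $\rho(G')$. The matrix $A(G')$ is nonnegative, so by Perron--Frobenius we may take $y\ge 0$; $G'$ itself may be disconnected, which is why only nonnegativity is available here. Then
$$\rho(G')=y^{T}A(G')y\le y^{T}A(G)y\le\rho(G),$$
and the middle inequality holds because $A(G)-A(G')$ has nonnegative entries and $y\ge 0$.

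The main step is making the inequality strict. Suppose $\rho(G')=\rho(G)$. Then $y$ attains the maximum of the Rayleigh quotient of $A(G)$. For a symmetric matrix, a maximizer of the Rayleigh quotient is an eigenvector for the largest eigenvalue, so $A(G)y=\rho(G)y$.

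Since $G$ is connected, $A(G)$ is irreducible. By Perron--Frobenius, every nonnegative eigenvector of $A(G)$ for $\rho(G)$ is a positive multiple of the Perron vector, so $y>0$ entrywise. It follows that
$$y^{T}\bigl(A(G)-A(G')\bigr)y\ \ge\ 2y_uy_v\ >\ 0,$$
which contradicts the equality $y^{T}A(G')y=y^{T}A(G)y$ forced by $\rho(G')=\rho(G)$. Hence $\rho(G)>\rho(G')$.

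The only subtle point is to first show that $y$ is an eigenvector of $A(G)$, and only then deduce that it is positive. Positivity cannot be taken for granted at the start, because $G'$ need not be connected.
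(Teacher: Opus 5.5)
Your proof is correct and complete. The paper gives no proof of Lemma~\ref{lm4}; it only cites it from the literature. Your argument is the standard Perron--Frobenius/Rayleigh-quotient proof of this fact.

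You also handle the one real subtlety correctly. Positivity of the Perron vector $y$ of the possibly disconnected $G'$ can only be deduced after you have shown $A(G)y=\rho(G)y$.

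One small wording fix: for a general real symmetric matrix, $\max_{y\neq 0} y^{T}My/y^{T}y$ is the largest eigenvalue $\lambda_1(M)$, not $\rho(M)$ (take $M=-I$). The two coincide for nonnegative symmetric matrices such as $A(G)$ and $A(G')$, which is the only case you use.
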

%\vspace*{2mm}	
\noindent \emph{\textbf{Proof of Theorem \ref{thm1}}.}
 %\begin{proof}[\textbf{Proof of Theorem \ref{thm1}}]
 	Let $G$ be the extremal graph for $\textit{spex}_\mathcal{P}(n,C_{k,l})$. Noting that $C_{k,l}$ is a planar graph not contained in $K_{2,n-2}$, where
 	$n \geq \max\{2.67 \times 9^{17}, \frac{10}{9}|V(F)|\}$. Then $G$ contains a copy of $K_{2,n-2}$ , and there exist two vertices $u',u''\in V( G)$ such that $R:=N_G(u')\cap N_G( u'')=V( G)\setminus\{u', u''\}$ and $x_{u'}= x_{u''}= 1$ by Lemma \ref{lm1}. Without loss of generality, let $R=\{v_1,v_2, \ldots , v_{n-2}\}$.
%\end{proof}

\begin{claim}\label{claim1}
 $G[R]$ is a disjoint union of some paths.
\end{claim}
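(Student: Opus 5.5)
We will prove the claim using planarity alone; the extremality of $G$ and the $C_{k,l}$-freeness are not needed. The claim reduces to two facts about $G[R]$: every vertex of $G[R]$ has degree at most $2$, and $G[R]$ contains no cycle. A graph with these two properties is exactly a disjoint union of paths, where isolated vertices count as copies of $P_1$.

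\emph{Step 1 (maximum degree at most $2$).} Suppose some $v\in R$ has three neighbours $w_1,w_2,w_3\in R$. Consider the parts $\{u',u'',v\}$ and $\{w_1,w_2,w_3\}$. By Lemma \ref{lm1}, $u'$ and $u''$ are adjacent to every vertex of $R$, so both are adjacent to each $w_i$. By assumption $v$ is adjacent to each $w_i$. Hence $G$ contains a copy of $K_{3,3}$. This contradicts the planarity of $G$ by Kuratowski's theorem.

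\emph{Step 2 (no cycles).} Suppose $G[R]$ contains a cycle $C=v_{i_1}v_{i_2}\cdots v_{i_t}v_{i_1}$ with $t\ge 3$. Then $u'$ together with $C$ induces a wheel $K_1+C_t$. In any plane embedding of this wheel, $C$ is a Jordan curve and $u'$ lies in one of its two regions; every spoke $u'v_{i_j}$ lies in that same region. The spokes cut that region into $t$ faces, and each face is bounded only by $u'$, two consecutive vertices of $C$, and the edge between them. Now $u''$ lies off $C$ and $u''\neq u'$. So $u''$ either lies in the other region of $C$, whose boundary is $C$ alone and does not contain $u'$, or it lies in one of the triangular faces $u'v_{i_j}v_{i_{j+1}}$.

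In the first case $u''$ cannot reach $u'$, but this is not a contradiction, because $u''$ need not be adjacent to $u'$. So we argue more directly: contract all of $C$ to a single vertex. This gives a minor $K_3$ on $\{u',u'',C\}$, which is not useful either.

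The cleaner route is a minor argument. Since $t\ge3$, pick three vertices $a,b,c$ on $C$. Contract $C$ onto them, which yields a triangle $abc$; both $u'$ and $u''$ remain adjacent to $a,b,c$. The result is $K_{2}^{c}+K_3$, that is, $K_5$ minus the edge $u'u''$. That graph is planar, so this is still not enough.

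Instead we use the face argument properly. In the first case, $u''$ lies in the region of $C$ not containing $u'$, and all of $C$ is on its boundary, so this is consistent with planarity. This shows that a cycle in $G[R]$ is not by itself excluded by planarity; indeed $K_2^c+C_t$ is planar, as the octahedron shows.

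So we must instead use extremality and $C_{k,l}$-freeness. The corrected plan for Step 2 is as follows. Since $G[R]$ has maximum degree at most $2$ by Step 1, each component is a path or a cycle. If some component is a cycle $C_t$, delete one of its edges. The result $G'$ is a proper spanning subgraph, so by Lemma \ref{lm4} its spectral radius is smaller, which is the wrong direction for a contradiction.

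The intended argument is therefore: suppose $G[R]$ contains a cycle $C_t$. Show that $G$ then contains $C_{k,l}$ whenever $n$ is large, using the many other vertices of $R$. Take $u'$ as the common vertex of the two cycles. The first cycle uses $C$: it goes from $u'$ to a vertex of $C$, follows an arc of $C$ of suitable length, and returns to $u'$. This produces any cycle length from $3$ to $t+1$. The second cycle alternates between $u''$ and fresh vertices of $R$: $u'x_1u''x_2u''$ is not allowed since it repeats $u''$, but $u'x_1u''x_2u'$ gives only length $4$.

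Other lengths come from paths in $G[R]$ with both endpoints joined to $u'$ or $u''$. This requires knowing the structure of $G[R]$, which is the main obstacle. The first thing to try is to show that in the extremal graph the cycle component can be replaced by a path of the same order. Concretely, write $C_t=v_1\cdots v_tv_1$, delete $v_tv_1$, and attach $v_t$ to an endpoint of another path component, so the total number of edges is unchanged. Then compare the spectral radii using Lemma \ref{lm2}: all entries of $X$ on $R$ lie between $2/\rho$ and $2/\rho+4.496/\rho^2$, so the two quadratic-form contributions differ by $O(\rho^{-3})$. We would also check that the new graph stays $C_{k,l}$-free; this is where the argument may fail.
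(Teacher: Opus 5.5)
Your Step 1 is correct and is the same $K_{3,3}$ argument as the paper's. Step 2, however, is never proved. The proposal ends with a speculative spectral-swap plan that you yourself say may fail, and your opening assertion that planarity alone suffices is false. The missing idea is a case split on whether the cycle $C\subseteq G[R]$ spans $R$; this is how the paper argues.

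If $|V(C)|<n-2$, take $w\in R\setminus V(C)$. Since $w$ is adjacent to both $u'$ and $u''$, contracting $u'w$ produces the edge $u'u''$. Contracting $C$ to a triangle then yields a $K_5$-minor, contradicting planarity. This is exactly what your $\overline{K_2}+K_3$ computation lacked: you used only $u'$, $u''$ and $C$, and ignored that every other vertex of $R$ joins $u'$ to $u''$. Your Jordan-curve picture closes the same way. The vertex $u''$ cannot lie in a triangular face $u'v_{i_j}v_{i_{j+1}}$, since it must reach all $t\ge 3$ vertices of $C$. So it lies on the side of $C$ opposite $u'$, and then the path $u'wu''$ would have to cross $C$. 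Your octahedron remark is correct, but it only shows that planarity cannot exclude a cycle equal to all of $G[R]$.

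In that remaining case $G[R]\cong C_{n-2}$, and no further structural information is needed. For $n\ge l+k$ it contains a path $v_1v_2\cdots v_{l+k-2}$, and the cycles $u'v_1\cdots v_{l-1}u'$ and $u'v_l\cdots v_{l+k-2}u'$ form a copy of $C_{k,l}$. The paper instead uses a path on $l+k-3$ vertices and closes the $k$-cycle through $u''$. For $k=3$ that version degenerates to $u'u''v_lu'$ and needs the edge $u'u''$, which is not available at this point.

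You were capped at length $4$ because you built the second cycle only from edges between $u'$, $u''$ and single vertices of $R$. The arcs of the spanning cycle give every length. Finally, the fallback you sketch would not work as stated. Lemma \ref{lm2} only confines the entries on $R$ to an interval of width $4.496/\rho^2$. Replacing one edge inside $R$ by another therefore changes $X^TA(G)X$ by an amount of order $\rho^{-3}$ whose sign that lemma cannot determine. The $C_{k,l}$-freeness of the modified graph is also left unverified.
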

 \noindent \emph{Proof.}
% \begin{proof}
 We only need to prove that $G[R]$ does not contain cycles and $d_R(v_i) \le 2$ for all $i \in \{1,2, \ldots ,n-2\}$. We first assume that $G[R]$ contains a cycle $C_m=v_1v_2 \cdots v_{m}v_1$, where $3 \leq m \leq n-2$.  If $3 \leq m<n-2$, then there exists an $i \in \{m+1,  \ldots  ,n-2\}$ such that $v_i \notin  V (C_m)$. Without loss of generality, suppose that $i=n-2$. Then $G[V(C_m) \cup \{u',u'',v_{n-2}\}]$ contains a $K_5$-minor. This contradicts the fact that $G$ is a planar graph. If $m=n-2$, then for sufficiently large $n > l+k-1 $, $G[R]$ contains $P_{l+k-3}=v_1v_2 \cdots v_{l+k-3}$. Thus, the union of the cycles $u'v_{1} \cdots v_{l-1}u'$ and $u'v_l \cdots v_{l+k-4}u''v_{l+k-3}u'$ forms a copy of $C_{k,l}$ in $G$, a contradiction. Therefore, we get $G[R]$ dose not contain cycles. Next, we assume that there exists a vertex $v_i$ satisfying $d_R(v_i) \geq 3$. Thus, $|N_{R}(v_i)| \ge 3$ and denote $\{v_{i_1}, v_{i_2}, v_{i_3}\} \subset N_R(v_i)$. Clearly, $G[\{u', u’'', v_{i}, v_{i_1}, v_{i_2}, v_{i_3}\}]$ contains $K_{3,3}$, again contradicting planarity of $G$. Thus, $d_R(v_i) \le 2$ for all $i \in \{1,2, \ldots ,n-2\}$. We conclude that $G[R]$ is a disjoint union of some paths.
 	%	\end{proof}
		
\begin{claim}\label{claim2}
$u'u'' \in E(G)$.
\end{claim}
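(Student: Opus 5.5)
We argue by contradiction: suppose $u'u''\notin E(G)$, and let $G'=G+u'u''$. If we show that $G'$ is still planar and still $C_{k,l}$-free, then Lemma \ref{lm4} gives $\rho(G')>\rho(G)$, since $G$ is a proper spanning subgraph of the connected graph $G'$. This contradicts the extremality of $G$.

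\emph{Planarity.} By Claim \ref{claim1}, $G[R]$ is a disjoint union of paths, so $G\subseteq K_2+H$ for some linear forest $H$ on $R$, with $u',u''$ playing the role of $K_2$. Join the path components of $H$ end to end into one Hamiltonian path $P$ of $R$. The graph $K_2+P$ is planar: draw $P$ on a horizontal line, put $u'$ above and $u''$ below, and draw $u'u''$ around the outside. Hence $G'\subseteq K_2+P$ is planar.

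\emph{$C_{k,l}$-freeness.} This is the main obstacle. Suppose $G'$ contains a copy $C$ of $C_{k,l}$. Then $C$ must use the edge $u'u''$. Let $Q$ be the cycle of $C$ containing $u'u''$. We would replace $u'u''$ by a detour $u'wu''$ with $w\in R$ and $w\notin V(C)$. This gives a cycle one longer, which is not what we want, so instead we shorten by shifting. Write $Q=u'u''v_{a}\cdots v_{b}u'$, so that $Q-u'u''$ is a path $u''\cdots u'$ whose interior lies in $R$ (or passes through other structure).

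Since $n$ is huge compared with $k+l$, most vertices of $R$ lie outside $C$. Choose such a vertex $w$. Replace the edge $u'u''$ together with the first interior edge $u''v_a$ of $Q$ by the path $u'\,w\,u''$ when $v_a$ can be bypassed. More concretely, swap $u''$ and an interior vertex of $R$: we find an $R$-vertex $w\notin V(C)$ and re-route through $u''$ via $w$ so that the cycle lengths are preserved, without using $u'u''$. The cleanest route is as follows.
\begin{itemize}
\item If $u''$ is not the common vertex of the two cycles of $C$, replace $u''$ in $C$ by a fresh vertex $w\in R\setminus V(C)$. The new cycle needs $w$ adjacent to both $C$-neighbours of $u''$. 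One of these is $u'$, and $w\sim u'$ holds. The other neighbour $y$ must be adjacent to $w$, and this generally fails.
\item So instead replace $u'$ and $u''$ roles: if the other neighbour $y$ of $u''$ on $Q$ is in $R$, then $y\sim u'$. In that case the cycle obtained from $Q$ by deleting $u''$ is shorter by one. Adding a fresh $w$ between $u'$ and some vertex adjacent to both fails as well.
\end{itemize}

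I therefore expect a direct case analysis. Every cycle in $G'$ through $u'u''$ has the form $u'u''P_1u'$, where $P_1$ is a subpath of $G[R]$, or passes through both $u',u''$ once. In the first case, replace it by $u'wu''P_1'u'$, where $P_1'$ is $P_1$ with one endpoint removed. This works when $|P_1|\ge2$, giving the same length and avoiding the edge $u'u''$. When $|P_1|=1$ it is a triangle $u'u''v$, which is replaced by $u'vv'u'$ if $v$ has an $R$-neighbour outside $C$, or by $u'wu''v\cdots$.

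The delicate subcase is where $u'$ or $u''$ is the shared vertex and both cycles are short. There, freshness of $w$ and disjointness from the other cycle must be checked by hand. It uses that $C$ meets at most two vertices outside $R$ and that $|R|\gg k+l$, and it is where I expect the argument to need the most care.
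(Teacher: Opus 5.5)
\textbf{Verdict: there is a genuine gap.} The case $k=3$ with $u'u''$ on the triangle is left unresolved. Everything else works.

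\textbf{What is correct.} Your planarity argument is fine. So is your treatment of the case where the cycle $Q\ni u'u''$ has length at least $4$: the replacement $u'wu''P_1'u'$ is a valid cycle of the same length. You should say why dropping an endpoint of $P_1$ cannot destroy the common vertex of the two cycles. That vertex must be $u'$ or $u''$: otherwise the other cycle avoids $u',u''$ and lies in the forest $G[R]$, contradicting Claim~\ref{claim1}. The paper does this step more cheaply, with no fresh vertex, by the swap $u'u''v_1v_2\cdots v_{m-2}u' \mapsto u'v_1u''v_2\cdots v_{m-2}u'$. This keeps the vertex set of the cycle, so no bookkeeping is needed.

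\textbf{The gap.} It is the case you leave open: $k=3$ and $Q=u'u''v$ is the triangle. Your suggested repairs do not cover it.
\begin{itemize}
\item The triangle $u'vv'u'$ requires an edge $vv'$ of $G[R]$ with $v'\notin V(C)$, which need not exist, e.g. if $v$ is isolated in $G[R]$.
\item It must also be centred at whichever of $u',u''$ is the common vertex.
\item The fallback $u'wu''v\cdots$ is not a triangle at all.
\end{itemize}
In fact no repair that keeps the $l$-cycle fixed can work in general. Since $u'u''\notin E(G)$ and $G[R]$ is acyclic, every triangle of $G$ is $xab$ with $x\in\{u',u''\}$ and $ab\in E(G[R])$. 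If the only edges of $G[R]$ are those of the path $C_l-u'$, every such triangle meets $C_l$ in at least two vertices.

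\textbf{The missing idea: re-route both cycles.} Suppose the common vertex is $u'$; the case $u''$ is symmetric. Write $C_l=u'w_1w_2\cdots w_{l-1}u'$. Since $u''\notin V(C_l)$, all $w_i$ lie in $R\setminus\{v\}$, and $w_1\cdots w_{l-1}$ is a path in $G[R]$. Now take
\begin{itemize}
\item the triangle $u'w_1w_2u'$, and
\item the $l$-cycle $u'w_3\cdots w_{l-1}u''vu'$, which has $1+(l-3)+2=l$ vertices and exists because $l\ge 4$.
\end{itemize}
Both are cycles of $G$, they meet only in $u'$, and neither uses $u'u''$. So $G$ already contains $C_{3,l}$, which is the required contradiction.

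The paper's own proof has the same hole. Its ``without loss of generality $u'u''\in E(C_l)$'' is justified only for $k\ge 4$, where the same swap applies equally to $C_k$.
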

 \noindent \emph{Proof.} 
 %\begin{proof}
Suppose to the contrary that $u'u''\notin E(G)$. Let $G'=G+{u'u''}$. Clearly, $G'$ is a planar graph. We assert that $G'$ remains $C_{k,l}$-free. Otherwise, suppose that $G'$ contains $C_{k,l}$. Then $u'u''\in E(C_{k,l})$. Without loss of generality, assume that $u'u'' \in E(C_l)$. Let $C_l=u'u''v_1v_2 \cdots v_{l-2}u'$. If $V(C_k) \cap V(C_l)=\{u'\}$ or $\{u''\}$, by symmetry, it suffices to consider the  $V(C_k) \cap V(C_l) = \{u'\}$. Let $C_k = u'v_{l-1}\cdots v_{l+k-3}u'$. Then, the union of the cycles $u'v_{l-1} \cdots v_{k+l-3}u'$ and $u'v_1u''v_2 \cdots v_{l-2}u'$ forms a copy of $C_{k,l}$ in $G$, a contradiction. If $V(C_k) \cap V(C_l)=\{v_i\}$ for some $i \in \{1,2, \ldots ,l-2\}$, then $G$ contains a cylce $C_{k}$ in $G[R]$, which contradicts Claim \ref{claim1}. Therefore, $G'$ is a planar graph without $C_{k,l}$. Since $G \subsetneq G'$, by Lemma \ref{lm4}, $\rho(G') > \rho(G)$, contradicting the extremality of $G$. Thus, $u'u'' \in E(G)$.

%\begin{case}
%$V(C_k) \cap V(C_l)=\{u'\}$ or $\{u''\}$.

%By symmetry, it suffices to consider the case $V(C_k) \cap V(C_l) = \{u'\}$. Let $C_k = u'v_{l-1}\cdots v_{l+k-3}u'$. Then $u'v_{l-1} \cdots v_{k+l-3}u'$ and $u'v_1u''v_2 \cdots v_{l-2}u'$ are two cycles of length $k$ and $l$.
%These two cycles share the common vertex $u'$, so $G$ contains $C_{k,l}$, a contradiction.
%\end{case}
%Let $C_k=u'v_{l-1}v_l \cdots v_{k+l-3}u'$. Obviously, $u'v_{l-1}v_l \cdots v_{k+l-3}u'$ and $u'v_1u''v_2 \cdots v_{l-2}u'$ are two cycles of length $k$ and $l$, and they intersect at $u'$. Then $G$ contains $C_{k,l}$ as a subgraph, this is a contradiction.

%\begin{case}
%$V(C_k) \cap V(C_l)=\{u''\}$. 
%Let $C_k=u''v_{l-1}v_l \cdots v_{k+l-3}u''$. Obviously, $u''v_{l-1}v_l \cdots v_{k+l-3}u''$ and $u''v_1u'v_2 \cdots v_{l-2}u''$ are two cycles of length $k$ and $l$, and they intersect at $u''$. Then $G$ contains $C_{k,l}$ as a subgraph, this is a contradiction.
%\end{case}

%\begin{case}
%$V(C_k) \cap V(C_l)=\{v_i\}$, where $i \in \{1,2, \cdots ,l-2\}$.

%Without loss of generality, assume that $V(C_k) \cap V(C_l)=\{v_{l-2}\}$. Let $C_k=v_{l-2}v_{l-1} \cdots$ $v_{l+k-3}$ $v_{l-2}$.
%Obviously, $G[\{u',u'',v_{l-2},v_{l-3},v_{l-1},v_{l+k-3}\}]$ form a copy of $K_{3,3}$-minor. This contradicts the fact that G is a planar graph.
%\end{case}
%By combining the proofs of Case $1$ and Case $2$, we get that $G'$ is a planar graph without $C_{k,l}$. Since $G \subsetneq G'=G+{u'u''}$, by Lemma \ref{lm4}, $\rho(G') > \rho(G)$. This contradicts the definition of $G$. Thus, $u'u'' \in E(G)$.
%\end{proof}

Suppose that $H$ is a disjoint union of $q$ paths, denoted by $\bigcup_{i=1}^{q} P_{{n_i}(H)}$ with $n_1(H) \geq n_2(H) \geq \cdots \geq n_{q}(H)$, and $q \geq 2$. Let $K_2 + H$ be $C_{k,l}$-free. The following claims hold.

\begin{claim}\label{claim3} 
$n_1(H)+n_2(H) \leq l+k-4$.
\end{claim}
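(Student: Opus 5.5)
We argue by contraposition: assuming $n_1(H)+n_2(H)\ge l+k-3$, we exhibit an explicit copy of $C_{k,l}$ in $K_2+H$. Write $V(K_2)=\{u',u''\}$, so that $u'u''$ is an edge and both $u',u''$ are adjacent to every vertex of $H$. Let $P$ and $Q$ be the components of $H$ on $n_1:=n_1(H)$ and $n_2:=n_2(H)$ vertices; $Q$ exists since $q\ge 2$, so $n_2\ge1$.

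\textbf{Building blocks.} All cycles will use $u'$ as the common vertex, and will be of three kinds.
\begin{enumerate}[label=(\roman*)]
\item If $S$ is a subpath of a component of $H$ on $s\ge2$ vertices, then $u'$ followed by $S$ and back to $u'$ is a cycle of length $s+1$.
\item If $T$ is a subpath on $t\ge1$ vertices, then $u'u''$ followed by $T$ and back to $u'$ is a cycle of length $t+2$.
\item If $A\subseteq P$ and $B\subseteq Q$ are subpaths with $|A|,|B|\ge1$, then $u'\,A\,u''\,B\,u'$ (traverse $A$, jump to $u''$, traverse $B$, return to $u'$) is a cycle of length $|A|+|B|+2$.
\end{enumerate}
A path on $m$ vertices contains two vertex-disjoint subpaths on $a$ and $b$ vertices whenever $a+b\le m$. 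Two cycles built this way meet only in $u'$ provided their $H$-parts are disjoint and only one of them uses $u''$. So it suffices to find, inside $P\cup Q$, a disjoint pair consisting of a $(k-1)$-vertex subpath (for $C_k$ via (i)) and an $l$-cycle skeleton of type (ii) or (iii) (for $C_l$).

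\textbf{Case 1: $n_2\ge \min\{k-1,\,l-2\}$ together with the matching bound on $n_1$.} If $n_1\ge l-2$ and $n_2\ge k-1$, take $C_l$ of type (ii) on $l-2$ vertices of $P$ and $C_k$ of type (i) on $k-1$ vertices of $Q$. If instead $n_2\ge l-2$, then also $n_1\ge n_2\ge l-2>k-1$, and we place $C_l$ in $Q$ and $C_k$ in $P$.

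\textbf{Case 2: everything else, where $n_2\le l-3$.} Use type (iii) for $C_l$ with $B=Q$ entirely, so $|B|=n_2\ge1$. Then $|A|=l-2-n_2\ge1$. Take $C_k$ of type (i) on a $(k-1)$-vertex subpath of $P$ disjoint from $A$. This needs $(k-1)+(l-2-n_2)\le n_1$, which is exactly the hypothesis $n_1+n_2\ge l+k-3$.

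The two cases are exhaustive. If $n_2\ge l-2$ we are in Case 1. If $n_2\le l-3$, Case 2 applies with no further condition on $n_2$.

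\textbf{Main obstacle.} The real work is the bookkeeping that the two cycles intersect only at $u'$: $u''$ must lie only on $C_l$, and the $H$-segments must be disjoint. Also, every segment must have the size required by its cycle type: at least $2$ vertices for type (i), which holds since $k-1\ge2$, and at least $1$ vertex for types (ii) and (iii). Once these checks are done, the contradiction with $C_{k,l}$-freeness gives $n_1(H)+n_2(H)\le l+k-4$.
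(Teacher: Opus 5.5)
Your argument is correct. Like the paper, you prove the contrapositive by exhibiting an explicit $C_{k,l}$ in $K_2+H$ built from $P_{n_1}$, $P_{n_2}$, $u'$ and $u''$; the difference lies in how the cases are organized.

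The paper splits on $n_1$ into four ranges: $n_1\le l-3$, $n_1=l-2$, $l-1\le n_1\le l+k-4$, and $n_1\ge l+k-3$. It lays both cycles out consecutively along $v_1v_2\cdots v_{n_1+n_2}$. The shared vertex is $u''$ in the first range and $u'$ in the others.

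You split only on whether $n_2\le l-3$. In that case you absorb all of $Q$ into $C_l=u'Au''Qu'$ and take $C_k$ from the remainder of $P$. This works for every value of $n_1$ and uses $n_1+n_2\ge l+k-3$ exactly where it is needed. This is the paper's case $n_1\le l-3$ with the roles of the two paths, and of $u'$ and $u''$, interchanged. Otherwise $n_1\ge n_2\ge l-2\ge k-1$, and the configuration $u'u''Tu'$ in $P$ plus $u'Su'$ in $Q$ suffices. This is the same configuration the paper uses when $n_1=l-2$.

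Always using $u'$ as the common vertex and confining $u''$ to $C_l$ makes the disjointness check uniform, so your version is shorter.

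Two cosmetic points. The inequality $l-2>k-1$ in Case 1 fails when $l=k+1$, but only $l-2\ge k-1$ is used, and that holds since $l>k$. Also, the second subcase of Case 1 is already covered by the first.
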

 \noindent \emph{Proof.} 
 %\begin{proof}
Suppose to the contrary that $n_1(H)+n_2(H) \geq l+k-3$. Let $P_{n_1(H)}=v_1v_2 \cdots v_{n_1(H)}$ and $P_{n_2(H)}=v_{n_1(H)+1}v_{n_1(H)+2} \cdots v_{n_1(H)+n_2(H)}$. If $n_1(H) \leq {l-3}$, then the union of the cycles $ u'v_1v_2 \cdots$ $ v_{n_1(H)}$ $u''v_{{n_1(H)}+1} \cdots$ $v_{l-2}u'$ and $u''v_{l-1}v_l \cdots $ $v_{l+k-3}u''$ forms a copy of $C_{k,l}$ in $G$ obviously, a contradiction. If $n_1(H)={l-2}$, then the union of the cycles $u'v_1v_2 \cdots v_{l-2}u''u'$ and $u'v_{l-1}v_{l} \cdots $ $v_{l+k-3}u'$ forms a copy of $C_{k,l}$ in $G$, a contradiction. If ${l-1} \leq {n_1(H)} \leq {l+k-4}$, then we get the union of the cycles $u'v_1v_2 \cdots v_{l-1}u'$ and $u'v_{l} \cdots v_{n_1(H)}u''v_{{n_1(H)}+1}$ $v_{{n_1(H)}+2} \cdots v_{l+k-3}u'$ which forms a copy of $C_{k,l}$ in $G$, a contradiction. If $n_1(H) \geq {l+k-3}$, then the union of the cycles $u'v_1v_2 \cdots v_{l-1}u'$ and $u'v_{l}v_{l+1} $ $\cdots v_{l+k-3}u''u'$ also forms a copy of $C_{k,l}$ in $G$, a contradiction. Therefore, $n_1(H)+n_2(H) \leq l+k-4$.

\begin{claim}\label{claim4} 
If $H$ satisfy the following three conditions:
 
 	\begin{description}
 	\item[(i)]$n_1(H)+n_2(H) \leq l+k-4$;
 	\item[(ii)]$n_1(H)+n_2(H)+n_3(H) \leq l+k-4$ or there are no three paths with one of order at least $l-1$ and the sum of the other two is at least $k-2$;
 	\item[(iii)]$n_1(H)+n_2(H)+n_3(H) \leq l+k-4$ or there are no three paths with one of order at least $k-1$ and the sum of the other two is at least $l-2$.
 	\end{description}
Then ${K_2+H}$ is $C_{k,l}$-free.
 \end{claim}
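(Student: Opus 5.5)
The plan is to argue by contraposition. Suppose $K_2+H$ contains a copy of $C_{k,l}$, consisting of a $k$-cycle $C_k$ and an $l$-cycle $C_l$ meeting in exactly one vertex $w$. We show that at least one of (i), (ii), (iii) fails. As in Claim \ref{claim2}, write $V(K_2)=\{u',u''\}$ with $u'u''\in E$, and note that $H$ is a forest of paths.

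First we classify the cycles of $K_2+H$. Every cycle contains $u'$ or $u''$, since $H$ is acyclic. Deleting $\{u',u''\}$ from a cycle $C$ of length $c$ leaves one of three things.
\begin{description}
\item[Type 1.] $C$ meets exactly one of $u',u''$. Then $C-\{u',u''\}$ is a single subpath with $c-1$ vertices, lying inside one path of $H$.
\item[Type 2.] $C$ contains the edge $u'u''$. Then $C-\{u',u''\}$ is a single subpath with $c-2$ vertices inside one path of $H$.
\item[Type 3.] $C$ meets both $u'$ and $u''$ but does not use the edge $u'u''$. Then $C-\{u',u''\}$ consists of two vertex-disjoint subpaths with $c-2$ vertices in total. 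These lie in at most two paths of $H$; if they lie in the same path, they are disjoint segments of it.
\end{description}
In every case the segments of $C_k$ and of $C_l$ are vertex-disjoint, except that they share $w$ when $w\in V(H)$. Also, $u'$ and $u''$ cannot both lie in both cycles. Hence the total number of $H$-vertices used is
\[
|V(C_k)\cup V(C_l)|-|\{u',u''\}\cap (V(C_k)\cup V(C_l))| .
\]
The union has $k+l-1$ vertices. If both $u',u''$ are used, at least $k+l-3$ vertices of $H$ are used, unless one cycle is of Type 2 and the other passes through $u'$ or $u''$ only at $w$.

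Second, we carry out a case analysis on the location of $w$ and on which cycle contains which of $u',u''$. In each case we count how many distinct paths of $H$ carry the segments; this is at most three, because there are at most four segments and the shared vertex $w$ forces coincidences.

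\begin{description}
\item[One or two paths.] If all segments lie in at most two paths of $H$, then the orders of those paths sum to at least the number of $H$-vertices used. We check that this is at least $k+l-3$ in every configuration, so $n_1(H)+n_2(H)\ge k+l-3$, violating (i).

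Example: $w\in V(H)$ and both cycles are of Type 1, one through $u'$ and one through $u''$. Then both segments lie in the same path, whose order is at least $(k-1)+(l-1)-1=k+l-3$.

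The delicate subcase is a Type 2 cycle. We handle it by noting that the other cycle then contains $u'$ or $u''$ only at $w$. We also recheck the count when $w\in\{u',u''\}$, where the two cycles use disjoint segments.

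\item[Three paths.] This happens only when $w\in\{u',u''\}$, one cycle is of Type 1 through $w$, and the other is of Type 3 with its two segments in two further paths.
\begin{itemize}
\item If $C_l$ is the Type 1 cycle, some path has order at least $l-1$, and two other paths have orders summing to at least $k-2$.
\item If $C_k$ is the Type 1 cycle, some path has order at least $k-1$, and two others sum to at least $l-2$.
\end{itemize}
In both situations the three paths have total order at least $k+l-3$, so $n_1(H)+n_2(H)+n_3(H)\ge k+l-3$. Hence both alternatives of (ii), respectively of (iii), fail.
\end{description}

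The main obstacle is making the case enumeration exhaustive. One must make sure that every placement of the segments either lands on at most two paths, where the vertex count gives $k+l-3$, or is exactly the three-path configuration captured by (ii) or (iii). I would organize this by first fixing $w\in\{u',u''\}$ versus $w\in V(H)$, and then fixing the types of $C_k$ and $C_l$. This gives at most nine combinations, and each one is checked by the counting above, in the same spirit as the explicit constructions in Claim \ref{claim3}.
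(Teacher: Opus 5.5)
Your proof is correct and follows essentially the paper's route: contraposition, then a split on whether the common vertex is $u'$ or $u''$ or lies in $H$, and on how each cycle meets $\{u',u''\}$. Your Types 1--3 match the paper's subcases (whether $u''$ lies on $C_k$, on $C_l$, or on neither, and whether $u'u''$ is used), and only the Type~1 plus Type~3 configuration spread over three distinct paths escapes (i), where it is caught by (ii) or (iii). Your uniform vertex count is tidier than the paper's edge-adjacency subcases; the ``unless'' exception and the ``four segments'' remark are unnecessary, because the cycles share only $w$, so at most one cycle contains both $u'$ and $u''$, giving at most three segments and exactly $k+l-3$ vertices of $H$ whenever both $u'$ and $u''$ appear.
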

  \noindent \emph{Proof.} 
 %\begin{proof}
We prove the contrapositive: if $C_{k,l} \subset K_2 + H$, then at least one of $(i), (ii), (iii)$ is violated. And one of the following cases holds.
 \begin{caseee}
 $V(C_l) \cap V(C_k)=\{u'\}$ or $V(C_l) \cap V(C_k)=\{u''\}$.
 
 \end{caseee}
 
 By symmetry, we only consider $V(C_l) \cap V(C_k)=\{u'\}$.

 \begin{subcaseee}
 	
 $u'' \in V(C_k)$, $u'' \notin V(C_l)$.

 Let $C_l=u'v_1 \cdots v_{l-1}u'$. If  $u'u'' \in E(C_k)$, let $C_k=u'v_l \cdots v_{l+k-3}u''u'$, then $H$ contains $P_{l-1} \cup P_{k-2}=v_1v_2 \cdots v_{l-1} \cup v_lv_{l+1} \cdots v_{l+k-3}$ or $P_{l+k-3}=v_1v_2 \cdots v_{l-1}v_lv_{l+1} \cdots v_{l+k-3}$. Therefore, $n_1(H)+n_2(H) \geq \min\{l-1+k-2,l+k-3+1\}=l+k-3$. This contradicts condition $(i)$.
 Next, assume $u'u'' \notin E(C_k)$, and $C_k=u'v_lv_{l+1} \cdots v_{l+a}u''v_{l+a+1} \cdots v_{l+k-3}u'$. 
 If $\{v_{l-1}v_l ,v_{l+a}v_{l+a+1}\} \subset E(H)$, then $H$ contains $P_{l+k-3}$. Therefore, $n_1(H)+n_2(H) \geq l+k-2$. This contradicts condition $(i)$. 
 If $v_{l-1}v_l \in E(H)$ and  $v_{l+a}v_{l+a+1} \notin E(H)$, then $H$ contains $P_{l+a} \cup P_{k-a-3}=v_1v_2 \cdots v_{l-1}$ $v_lv_{l+1} \cdots v_{l+a} \cup$ $v_{l+a+1}$ $ \cdots v_{l+k-3}$. Therefore, $n_1(H)+n_2(H) \geq l+k-3$. This contradicts condition $(i)$.
 If $v_{l-1}v_l \notin E(H)$ and $v_{l+a}v_{l+a+1} \in E(H)$, then $H$ contains $P_{l-1} \cup P_{k-2}=v_1v_2 \cdots v_{l-1} \cup v_lv_{l+1} \cdots v_{l+a}v_{l+a+1} \cdots v_{l+k-3}$. Therefore, $n_1(H)+n_2(H) \geq l+k-3$. This contradicts condition $(i)$.
 If $\{v_{l-1}v_l,v_{l+a}$ $v_{l+a+1}\} \not\subset E(H)$, then $H$ contains $P_{l-1} \cup P_{a+1} \cup P_{k-a-3}=v_1v_2 \cdots v_{l-1} \cup v_lv_{l+1} \cdots v_{l+a} \cup v_{l+a+1} \cdots v_{l+k-3}$. Therefore, $n_1(H)+n_2(H)+n_3(H) \geq l+k-3$ and one path has order at least $l-1$ and the sum of the other two is at least $k-2$. This contradicts condition $(ii)$.
 
 \end{subcaseee}
 \begin{subcaseee}
 $u'' \in V(C_l)$ and $u'' \notin V(C_k)$ .
 
Let $C_k=u'v_1v_2 \cdots v_{k-1}u'$. If $u'u'' \in E(C_l)$, let $C_l=u'v_{k} \cdots v_{k+l-3}u''u'$, then $H$ contains $P_{k-1} \cup P_{l-2}=v_1v_2 \cdots v_{k-1} \cup v_{k}v_{k+1} \cdots v_{l+k-3} $ or $P_{l+k-3}=v_1v_2 \cdots v_{l+k-3} $. Therefore, $n_1(H)+n_2(H) \geq \min\{l-2+k-1,l+k-3+1\}=l+k-3$. This contradicts condition $(i)$.
Next, assume $u'u'' \notin E(C_l)$,  and $C_l=u'v_kv_{k+1} \cdots v_{k+a}u''v_{k+a+1} \cdots v_{l+k-3}u'$. If $\{v_{k-1}v_k,$ $v_{k+a}$ $v_{k+a+1}\}$ $\subset E(H)$, then $H$ contains $P_{l+k-3}$. Therefore, $n_1(H)+n_2(H) \geq l+k-3+1=l+k-2$. This contradicts condition $(i)$.
 If $v_{k-1}v_k \in E(H)$ and $v_{k+a}v_{k+a+1} \notin E(H)$, then $H$ contains $P_{k+a} \cup P_{l-a-3}=v_1v_2 \cdots v_{k-1}$ $v_kv_{k+1} \cdots v_{k+a} \cup v_{k+a+1} \cdots v_{l+k-3}$. Therefore, $n_1(H)+n_2(H) \geq l+k-3$. This contradicts condition $(i)$.
 If $v_{k-1}v_k \notin E(H)$ and $v_{k+a}v_{k+a+1} \in E(H)$, then $H$ contains  $P_{k-1} \cup P_{l-2}=v_1v_2 \cdots v_{k-1} \cup v_kv_{k+1} \cdots v_{k+a}v_{k+a+1} \cdots$ $ v_{l+k-3}$. Therefore, $n_1(H)+n_2(H) \geq l+k-3$. This contradicts condition $(i)$.
 If $\{v_{k-1}v_k,v_{k+a}$ $v_{k+a+1}\} \not\subset E(H)$, then $H$ contains $P_{k-1} \cup P_{a+1} \cup P_{l-a-3}=v_1v_2 \cdots v_{k-1} \cup v_kv_{k+1} \cdots v_{k+a} \cup v_{k+a+1} \cdots v_{l+k-3}$. Therefore, $n_1(H)+n_2(H)+n_3(H) \geq l+k-3$ and one path has order at least $k-1$ and the sum of the other two is at least $l-2$. This contradicts condition $(iii)$.

 	  \end{subcaseee}
   \begin{subcaseee}
   	
$u'' \notin V(C_{k,l})$.

 	Let $C_l=u'v_1v_2 \cdots v_{l-1}u'$, $C_k=u'v_{l}v_{l+1} \cdots v_{l+k-2}u'$. Thus, $H$ contains $P_{l-1} \cup P_{k-1}=v_1v_2 \cdots v_{l-1}$ $ \cup v_{l}v_{l+1} \cdots v_{l+k-2} $ or $P_{l+k-2}=v_1v_2 \cdots v_{l-1}v_{l}v_{l+1} \cdots v_{l+k-3}$ . Therefore, $n_1(H)+n_2(H) \geq \min\{l-1+k-1,l+k-2+1\}=l+k-2$. This contradicts condition $(i)$.
 	
\end{subcaseee}

   \begin{caseee}

$V(C_l) \cap V(C_k)=\{v\} \subset V(H)$.

 	Since $H$ dose not contain cycles and $K_2 + H$ contains $C_{k,l}$. Without loss of generality, we assume that $u' \in V(C_l)$, $u'' \in V(C_k)$. Let $C_l=u'v_1v_2 \cdots v_{l-2}vu'$, $C_k=u''vv_{l-1} \cdots v_{l+k-4}u''$. Thus, $H$ contains $P_{l+k-3}=v_1v_2 \cdots v_{l-2}vv_{l-1}v_{l} \cdots v_{l+k-4}$. Therefore, $n_1(H)+n_2(H) \geq l+k-2$. This contradicts condition $(i)$.
 	
 	 \end{caseee}
 	 
 %\end{proof}
  \vspace*{2mm}
  
Recall from Claims \ref{claim1} and \ref{claim2} that $G \cong K_2 +H$ , where $H \cong G[R] \cong \bigcup_{i=1}^{q} P_{n_i}$ with $n_1 \ge n_2 \ge \ldots \ge n_q$, and $u'u'' \in E(G)$. For convenience, let $P_{n_1}=v_1v_2 \cdots v_{n_1}$, $P_{n_2}=v_{n_1+1}  \cdots $ $v_{n_1+n_2}$ and $P_{n_3}=v_{n_1+n_2+1} \cdots v_{n_1+n_2+n_3}$.

\begin{claim}\label{claim5} 
 For integers $3 \leq  k < l \leq {2k-2}$ , $n >\max\{ \frac{21k^3 - 149k^2 + 338k - 240}{4}, \frac{187}{2}+9\sqrt{102}, 2.67 \times 9^{17}, \frac{10}{9}|V(F)|,10.2\times2^{k-2}+2\}$ , then $\rho(G) \leq \rho(K_2+H_{\mathcal{P}}(k-2,k-2))$, with equality if and only if $G \cong K_2+H_{\mathcal{P}}(k-2,k-2)$.

\end{claim}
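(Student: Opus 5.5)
The plan is to show that in the extremal graph $G\cong K_2+H$ (Claims \ref{claim1}, \ref{claim2}) every path of $H$ has order at most $k-2$, and then to use Lemma \ref{lm3} to push $H$ to $H_{\mathcal P}(k-2,k-2)$. The argument has four steps, in this order.

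\textbf{Step 1: the candidate is admissible.} First I check that $K_2+H_{\mathcal P}(k-2,k-2)$ is $C_{k,l}$-free, so that it competes in the extremal problem. Since every path of $H$ has at most $k-2$ vertices, a cycle avoiding $u''$ (or avoiding $u'$) has length at most $k-1$. Hence every cycle of length $\ge k$ contains both $u'$ and $u''$, and two such cycles share at least two vertices, so no $C_{k,l}$ exists.

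\textbf{Step 2: structure of $H$ when $n_1\ge k-1$.} Suppose for contradiction that $n_1\ge k-1$.
\begin{itemize}
\item Choose $k-1$ consecutive vertices of $P_{n_1}$; together with $u'$ they form a $C_k$.
\item A $C_l$ meeting this $C_k$ only in $u'$ can be built as $u'\,Q_1\,u''\,Q_2\,u'$, where $Q_1,Q_2$ are subpaths of two other components (or of the leftover part of $P_{n_1}$) with $|Q_1|+|Q_2|=l-2$.
\item Hence $C_{k,l}$-freeness forces that no two components other than the chosen segment have total order $\ge l-2$; this is condition (iii) of Claim \ref{claim4}.
\end{itemize}
Combining this with Claim \ref{claim3} ($n_1+n_2\le l+k-4$) and $l\le 2k-2$, every component except at most one, $P_{n_1}$, has order at most $\lfloor (l-3)/2\rfloor\le k-3$, except possibly $n_2$ (handled by the same pairing with $n_3$). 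Concretely, $n_i\le k-3$ for all $i\ge 3$, and $n_1,n_2$ are bounded by a constant depending only on $k$.

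\textbf{Step 3: spectral comparison.} Here I compare $G$ with $G^\ast=K_2+H_{\mathcal P}(k-2,k-2)$ via the eigenvector estimates of Lemma \ref{lm2}. Every $v\in R$ has $x_v\in[2/\rho,\,2/\rho+4.496/\rho^2]$ in both graphs.
\begin{itemize}
\item Write $\rho(G^\ast)-\rho(G)$ using $X^{T}(A(G^\ast)-A(G))Y$ with the Perron vectors of both graphs.
\item The main term is $\frac{4}{\rho^2}\bigl(e(H^\ast)-e(H)\bigr)$, and $e(H)=n-2-q(H)$.
\item By Step 2, $q(H)\ge \frac{n-O(k)}{k-3}$, while $q(H^\ast)\le \frac{n}{k-2}+1$. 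So $e(H^\ast)-e(H)\ge \frac{n}{(k-2)(k-3)}-O(k)$, which gives a gain of order $\frac{1}{k^2}$ since $\rho\approx\sqrt{2n}$.
\item The error terms are $O(n\cdot \rho^{-3})=O(n^{-1/2})$, plus $O(k^2)$ boundary discrepancies near the long paths, each weighted by $O(1/n)$.
\end{itemize}
For the stated lower bound on $n$ (the cubic in $k$), the gain beats the error, so $\rho(G)<\rho(G^\ast)$, contradicting extremality. Therefore $n_1\le k-2$.

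I expect Step 3 to be the main obstacle. The difficulty is making the bookkeeping precise: the numbers of edges differ by a linear amount, but each edge contributes only $\Theta(1/n)$, so one must verify carefully that the accumulated $4.496/\rho^2$ corrections stay below the gain. This is where the explicit polynomial threshold in $k$ comes from. I would first try the symmetric quadratic-form identity
$$(\rho^\ast-\rho)\,X^{T}Y=\sum_{uv\in E(G^\ast)\setminus E(G)}(x_uy_v+x_vy_u)-\sum_{uv\in E(G)\setminus E(G^\ast)}(x_uy_v+x_vy_u),$$
bounding the first sum from below by $2(2/\rho)^2$ per edge and the second from above by $2(2/\rho+4.496/\rho^2)^2$ per edge.

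\textbf{Step 4: normalization via Lemma \ref{lm3}.} With all $n_i\le k-2$, Claim \ref{claim4} shows that any such $K_2+H$ is $C_{k,l}$-free. Indeed, (i) holds because $n_1+n_2\le 2k-4\le l+k-4$; (ii) holds because no path has order $\ge l-1$; and (iii) holds because no path has order $\ge k-1$. Now repeatedly apply $(s_1,s_2)$-transformations (Definition \ref{de3}) to any two components with $s_1<k-2$ and $s_2\le s_1$. These transformations keep all orders at most $k-2$, hence stay in the admissible family. Since $s_2\le k-2$, the condition $n\ge 10.2\cdot 2^{k-2}+2$ makes Lemma \ref{lm3} applicable, and each transformation strictly increases $\rho$. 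The process terminates exactly at $H_{\mathcal P}(k-2,k-2)$, and the strictness of Lemma \ref{lm3} (together with Lemma \ref{lm4}) yields the equality case.
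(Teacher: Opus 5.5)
Your proposal is correct and uses the same ingredients as the paper's proof:
\begin{itemize}
\item the $C_{k,l}$ formed by $k-1$ vertices of $P_{n_1}$ with $u'$, together with an $l$-cycle through $u',u''$ using $P_{n_2}$ and $P_{n_3}$;
\item a Rayleigh-quotient comparison with $K_2+H_{\mathcal P}(k-2,k-2)$ based on Lemma~\ref{lm2};
\item the normalization via Lemma~\ref{lm3}.
\end{itemize}
The order is reversed. The paper first proves $n_3\ge k-2$ spectrally and then gets $n_1\le k-2$ from the cycle construction. You instead assume $n_1\ge k-1$, derive $n_3\le k-3$ from the construction, and then compare. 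You also check $C_{k,l}$-freeness of the candidate directly (every cycle of length at least $k$ passes through both $u'$ and $u''$) rather than through Claim~\ref{claim4}, which is cleaner.

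The one substantive difference is the edge bookkeeping, and there your version is the more careful one. The paper says the candidate is reached from $G$ with at most $a_1\le k-2$ deletions and charges each deletion $10/\rho^2$. However, regrouping the short components into copies of $P_{k-2}$ generally forces cutting linearly many of them. For example, if they are all $P_{k-3}$ and $k\ge 5$, only one whole $P_{k-3}$ fits into each $P_{k-2}$. Your accounting handles this: the net gain is $e(H^*)-e(H)\ge \frac{n-2}{(k-2)(k-3)}-O(1)$ at $8/\rho^2$ per edge, and there are up to $n$ deletions, each costing only an extra $O(\rho^{-3})$ over an added edge. But this requires roughly $\rho>4.5(k-2)(k-3)$, i.e.\ $n$ of order $10k^4$. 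That is supplied by the terms $2.67\times 9^{17}$ and $10.2\times 2^{k-2}+2$ in the hypothesis, not by the cubic term as you assert.

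Two small repairs are needed:
\begin{itemize}
\item Use the one-vector bound $\rho(G^*)-\rho(G)\ge X^T(A(G^*)-A(G))X/X^TX$, as the paper does. Lemma~\ref{lm2} as stated gives no bounds for the Perron vector of $G^*$, which your two-vector identity would need.
\item Split off $k=3$, since your Step 3 divides by $k-3$. Your Step 2 already settles this case: here $l=4$, and $n_1\ge 2$ would force $n_2+n_3\le 1$, which is impossible because $q\ge 3$ by Claim~\ref{claim3}.
\end{itemize}
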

  \noindent \emph{Proof.} We split into $k=3$ and $k\ge 4$. When $k=3$, by $k+1\le l\le 2k-2$, we have $l=4$. In this case, we claim that $G \cong K_2+H_{\mathcal{P}}(1,1)$, i.e., $e(H)=0$. Otherwise, suppose that there exists an edge $v_1v_2\in E(H)$, i.e., $P_{n_1}=v_1v_2$, and $v_3,v_4$ be two other vertices in $V(H)$ different from $v_1,v_2$. Obviously, the union of the cycles $u'v_1v_2u'$ and $u'v_3u''v_4u'$ forms a copy of $C_{3,4}$ in $G$, a contradiction. Consequently, we only need to discuss the case $k\ge 4$.

We first claim that $n_3 \ge k-2$. Otherwise, suppose that $n_3 < k-2$.
Let $G' \cong K_2+H_{\mathcal{P}}(k-2,k-2)$, by Claim \ref{claim4}, $G'$ is $C_{k,l}$-free. The graph $G'$ is obtained from $G$ by deleting at most $a_1=e(P_{n_1})+e(P_{n_2})-(k-3+k-3) \leq k-2$ edges and adding at least $b_1= \left\lfloor \frac{n-2k+2}{k-2} \right\rfloor (k-3) - \left\lceil \frac{n-({n_1}+{n_2})-2}{n_3} \right\rceil (n_3-1)$ edges. Thus,

\[
\begin{aligned}  
	b_1 &\geq \left( \frac{n-2k+2}{k-2}-1 \right)(k-3) - \left( \frac{n-({n_1}+{n_2})-2}{n_3}+1 \right)(n_3-1)\\
		&= \frac{(k-2-{n_3})n+(4-3k)(k-3){n_3}+({n_1}+
		{n_2}+2)(n_3-1)(k-2)-n_3(n_3-1)(k-2)}{(k-2)n_3}\\
	&\geq \frac{n+(4-3k)(k-3)(k-3)-(k-3)(k-4)(k-2)}{(k-2)(k-3)}\quad \text{(since $n_3 \le k-3$ )}\\
	&= \frac{n - 4k^3 + 31k^2 - 77k + 60}{(k-2)(k-3)}.
\end{aligned}
\]
 By Lemma \ref{lm2} and the Rayleigh principle, 
\[
\begin{aligned}  
	\rho(G')-\rho(G) &\geq \frac{X^T(A(G') - A(G))X}{X^TX}
	\geq \frac{{2b_1}\frac{4}{\rho^2}-{2a_1}\frac{5}{\rho^2}}{X^TX}\\
	&\geq \frac{2}{X^TX} \left( \frac{4(n - 4k^3 + 31k^2 - 77k + 60)}{(k-2)(k-3)\rho^2}-\frac{5(k-2)}{\rho^2} \right) \\
	&= \frac{2}{X^TX} \left(\frac{4n - 21k^3 + 159k^2 - 388k + 300}{(k-2)(k-3)\rho^2}\right)\\
	&>0,
\end{aligned}
\]
where $n >\max\{ \frac{21k^3 - 159k^2 + 388k - 300}{4}, \frac{187}{2}+9\sqrt{102}\}$ and $\rho=\rho(G) \geq \sqrt{2n-4}$. Hence  $\rho(G) < \rho(G')$. By the extremality of $G$, this is a contradiction. 

%We first assert that $n_3 \leq {k-2}$. Otherwise, suppose that $n_3 \geq {k-1}$. Then $n_1 \ge n_2 \ge n_3 \ge k-1$, hence $n_1 + n_2 \ge 2k-2 \ge l$. Thus, $G[V(P_{n_1}) \cup V(P_{n_2}) \cup V(P_{n_3}) \cup \{u', u''\}]$ contains $C_{k,l}$, a contradiction. Therefore, $n_3 \leq k-2$.  If $n_3\leq k-3$, a contradiction also follows from Claim \ref{claim5}. Consequently, we have $n_3=k-2$.

%Next, we show that $n_2 \leq {k-2}$. Otherwise, suppose that $n_2 \geq {k-1}$. Then $n_1 \geq n_2 \geq k-1,$ this implies both $G[V(P_{n_1}) \cup \{u'\}]$ and $G[V(P_{n_2}) \cup \{u'\}]$ contain $k$-cycle. We assert that $n_2+n_3 \leq n_1+n_3 \leq {l-3}$. If not, then $n_1+n_3 \geq n_2+n_3 \geq {l-2}$. Let $P_{n_1}=v_1v_2 \cdots v_{n_1}$, $P_{n_3}=v_{{n_1}+1} \cdots v_{l-2} \cdots v_{{n_1}+{n_3}}$. Therefor, $u'v_1v_2 \cdots v_{n_1}u''$ $v_{{n_1}+1} \cdots v_{l-2}u'$ is an $l$-cycle. It is straightforward to see that $G[V(P_{n_1} \cup P_{n_2} \cup P_{n_3} \cup {u',u''})]$ contains $C_{k,l}$, a contradiction. Thus, $n_3 \leq {l-3-{n_2}} < {k-3}$. By Claim \ref{claim5}, $\rho(G) < \rho(K_2 + H_{\mathcal{P}}(k-2,k-2))$, a contradiction. 
Next, we claim that $n_1 \leq {k-2}$. Suppose to contrary that $n_1 \geq {k-1}$. If $n_2+n_3 \leq {l-3} \le 2k-5$, then $n_3 \le \left\lfloor \frac{2k-5}{2} \right\rfloor =k-3 $, a contradiction. If $n_2+n_3 \geq {l-2}$, then the union of the cycles $u'v_1v_2 \cdots v_{k-1}u'$ and $u'v_{n_1+1}  \cdots v_{n_1+n_2}u''v_{n_1+n_2+1}  \cdots $ $ v_{n_1+l-2}u'$ forms a copy of $C_{k,l}$ in $G$, a contradiction. Therefore, $n_1 \le k-2$.

Finally, we prove that $n_i ={k-2}$ for $i \in \{1,2, \ldots ,q-1\}$. Otherwise, suppose there exist an $i_0=\min\{i|1 \leq i \leq q-1,n_i \leq {k-3}\}$. Let $H_1$ be an $(n_{i_0},n_q)$-transformation of $H$. Thus, $n_1(H_1)=\max\{n_1,n_{i_0}+1\} \leq {k-2}$. Since ${k-2} \geq n_1(H_1) \geq n_2(H_1) \geq n_3(H_1)$, $K_2+H_1$ is $C_{k,l}$-free and $\rho(K_2+H_1)>\rho(G)$ by Claim \ref{claim4} and Lemma \ref{lm3}, a contradiction. Thus, $n_i ={k-2}$ for $i \in \{1,2, \ldots ,q-1\}$. Therefore, $G \cong K_2+H_{\mathcal{P}}(k-2,k-2)$.

%\end{proof}
\vspace*{2mm} 
 \begin{claim}\label{claim6} 
 For integers $l \in \{2k-1,2k\}$, $n > \max \{5k^3-38k^2+92k-69,2.67 \times 9^{17}, \frac{10}{9}|V(F)|,$ $10.2\times2^{k-2}+2\}$ , then $\rho(G) \leq \rho(K_2+H_{\mathcal{P}}(l-2,k-2))$, with equality if and only if $G \cong K_2+H_{\mathcal{P}}(l-2,k-2)$.
 	
 \end{claim}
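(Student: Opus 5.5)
We follow the template of Claim \ref{claim5}. Throughout, $G\cong K_2+H$ with $H=\bigcup_{i=1}^q P_{n_i}$, $n_1\ge n_2\ge\cdots\ge n_q$, and $l\in\{2k-1,2k\}$, so $l-2\in\{2k-3,2k-2\}$. The target graph $G^*=K_2+H_{\mathcal{P}}(l-2,k-2)$ is $C_{k,l}$-free by Claim \ref{claim4}. Condition (i) holds because $n_1+n_2=l+k-4$. For (ii), the longest available triples are $(l-2)+(k-2)+(k-2)=l+2k-6$, but the only path of order $\ge l-1$ would have to be $P_{n_1}=P_{l-2}$, which is too short, so no such triple exists. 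For (iii), we need a path of order $\ge k-1$, which can only be $P_{l-2}$. The other two paths then sum to $2k-4\ge l-2$ only when $l=2k-1$ or $l=2k$ with $2k-4\ge 2k-2$. The second option is false, so the case $l=2k-1$ needs care.

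For $l=2k-1$, condition (iii) requires a $(k-1)$-path plus two paths summing to at least $l-2=2k-3$. Two $(k-2)$-paths give $2k-4<2k-3$, so (iii) holds. Hence $G^*$ is admissible in both cases.

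The proof of extremality proceeds in three steps.

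\textbf{Step 1: $n_3\ge k-2$.} Suppose not. Compare $G$ with $G'=G^*$. Passing from $G$ to $G'$ deletes at most $a_1\le n_1+n_2-(l+k-4)$ edges inside $P_{n_1}\cup P_{n_2}$, which is bounded by a constant in $k$ by Claim \ref{claim3}. It adds at least $b_1\ge \frac{n}{(k-2)(k-3)}-O(k)$ edges, since paths of order $\le k-3$ waste one edge per $k-3$ vertices, compared with one per $k-2$. Using Lemma \ref{lm2} (entries lie in $[2/\rho,\,2/\rho+4.496/\rho^2]$) and the Rayleigh quotient as in Claim \ref{claim5},
\[
\rho(G')-\rho(G)\ge \frac{2}{X^TX}\Bigl(\frac{4b_1}{\rho^2}-\frac{5a_1}{\rho^2}\Bigr)>0
\]
once $n>5k^3-38k^2+92k-69$. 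This contradicts extremality. Carrying out this cubic bookkeeping carefully is routine but is where the stated bound on $n$ comes from.

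\textbf{Step 2: $n_1=l-2$ and $n_2=\cdots=n_{q-1}=k-2$.}

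First, $n_2\le k-2$. If $n_2\ge k-1$, then $n_1\ge k-1$ as well, and $n_1+n_3\ge 2k-3\ge l-2$. Take a $k$-cycle $u'v_{n_1+1}\cdots v_{n_1+k-1}u'$ inside $P_{n_2}\cup\{u'\}$. Combine it with an $l$-cycle through $u'$ and $u''$ that uses the path $P_{n_1}$, then $u''$, then enough of $P_{n_3}$ to reach total length $l$ (the shape used in Claim \ref{claim3}). This gives $C_{k,l}$, a contradiction. Hence $n_2=n_3=k-2$.

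Next, $n_1\le l-2$, by Claim \ref{claim3}, since $n_1+n_2\le l+k-4$.

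Finally, if $n_1<l-2$ or some $n_i<k-2$ with $2\le i\le q-1$, apply an $(n_{i_0},n_q)$-transformation to the first deficient path (index $i_0$). This keeps conditions (i)--(iii) of Claim \ref{claim4}: orders stay bounded by $l-2$ for $P_{n_1}$ and by $k-2$ for the others, which is exactly the check made for $G^*$ above. Lemma \ref{lm3} (valid since $n>10.2\cdot 2^{k-2}+2$) then strictly increases $\rho$, a contradiction.

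\textbf{Step 3: conclusion.} Steps 1 and 2 show that $G\cong K_2+H_{\mathcal{P}}(l-2,k-2)$, and uniqueness follows from the strict inequalities.

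The main obstacle is the first part of Step 2: excluding two long paths when $n_2\ge k-1$. Exhibiting an explicit $C_{k,l}$ requires distributing the vertices of $P_{n_1}$ and $P_{n_3}$ correctly in the $l$-cycle. If $n_1$ is very large, I would instead take the $l$-cycle entirely along $P_{n_1}$ through $u'$, and the $k$-cycle in $P_{n_2}\cup\{u'\}$ via $u''$ when needed.
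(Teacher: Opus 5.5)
Your outline follows the paper's proof, but the first sub-step of Step~2 has a genuine gap. Step~1 is the same edge-count/Rayleigh comparison, and the final normalization uses the same $(n_{i_0},n_q)$-transformations with Lemma~\ref{lm3}.

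The chain $n_1+n_3\ge 2k-3\ge l-2$ holds only for $l=2k-1$; for $l=2k$ you have $l-2=2k-2>2k-3$. Combined with $n_3\ge k-2$, exactly one configuration escapes your construction: $l=2k$, $n_1=n_2=k-1$, $n_3=k-2$. No choice of cycles can close this case, because $K_2+(P_{k-1}\cup P_{k-1}\cup P_{k-2}\cup\cdots)$ is genuinely $C_{k,2k}$-free. Claim~\ref{claim4} applies, since $n_1+n_2+n_3=3k-4=l+k-4$. You can also see it directly:
\begin{itemize}
\item A $2k$-cycle through only one of $u',u''$ would need a path of order $2k-1$.
\item A $2k$-cycle through both picks up $2k-2$ vertices of $H$ in at most two segments, so it uses both copies of $P_{k-1}$.
\item The $k$-cycle must then close up through one of $u',u''$ along a single remaining path of order at least $k-1$, and none is left.
\end{itemize}
So your remark about the main obstacle points the wrong way: large $n_1$ is harmless, while this small balanced case is the real difficulty.

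This case has to be excluded spectrally. In its subcase $l=2k$ of the case $n_1+n_3\le l-3$, the paper applies the $(n_1,n_2)$-transformation to get $P_k\cup P_{k-2}\cup P_{k-2}\cup\cdots$. That graph is $C_{k,l}$-free by Claim~\ref{claim4}, and Lemma~\ref{lm3} gives the strict increase.

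A simpler repair is to add one edge joining an endpoint of one $P_{k-1}$ to an endpoint of the other. The result is $K_2+(P_{2k-2}\cup P_{k-2}\cup\cdots)$, that is, $P_{l-2}$ together with paths of order at most $k-2$. It is $C_{k,l}$-free by the same check you made for $G^*$. It contains $G$ as a proper spanning subgraph, so Lemma~\ref{lm4} contradicts extremality. This route also avoids invoking Lemma~\ref{lm3} with $s_2=k-1$, whose threshold $10.2\cdot 2^{k-1}+2$ is not implied by the hypothesis $n>10.2\cdot 2^{k-2}+2$ when $k$ is large.

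With this repair the rest of your argument goes through. It also handles $k=3$ uniformly, since Step~1 is vacuous there ($n_3\ge 1=k-2$), whereas the paper treats $k=3$ by a separate hand argument.
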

  \noindent \emph{Proof.} Again split into $k=3$ and $k\ge 4$. For $k=3$, we have $l=2k-1=5$. Then we claim that $G \cong  K_2+H_{\mathcal{P}}(3,1)$, i.e., $e(H)=2$. Otherwise, suppose that $e(H) \ge 3$. When $e(H)=3$, three distinct cases arise. If $e(P_{n_1}) = 1, e(P_{n_2}) = 1, e(P_{n_3}) = 1$, then the union of the cycles $u'v_1v_2u'$ and $u'v_{3}v_{4}u''v_{5}u'$ forms a copy of $C_{3,5}$ in $G$, a contradiction. If $e(P_{n_1}) = 2, e(P_{n_2}) = 1$, then the union of the cycles $u'v_1v_2v_3u''u'$ and $u'v_{4}v_{5}u'$ forms a copy of $C_{3,5}$ in $G$, which also gives a contradiction. If $e(P_{n_1}) = 3$, then  $G[V(P_{n_1})\cup\{u',u'',v_{5}\}]$ contains $C_{3,5}$, a contradiction. For $e(H)\ge 4$, $H$ contains a subgraph with three edges of one of the above forms, giving the same contradiction. Consequently, we obtain $e(H)\leq 2$. Since $\rho(K_2+H_{\mathcal{P}}(3,1))>\rho(K_2+H_{\mathcal{P}}(2,1))$ and $K_2+H_{\mathcal{P}}(3,1)$ is $C_{3,5}$-free, we have $e(H)=2$. Next, we claim that $G\cong K_2+H_{\mathcal{P}}(3,1)$. If not, then $H_{\mathcal{P}}(3,1)$ can be transformed from $H$ through at most two $(s_1,s_2)$-transformations. By Lemma \ref{lm3}, $\rho(K_2+H_{\mathcal{P}}(3,1))>\rho(G)$, a contradiction. Similarly, if $l=2k=6$, then $G \cong K_2+H(4,1)$. Therefore, we only consider the case $k\geq 4$ in the following discussion.

We first claim that $n_3 \ge k-2$. Suppose to contrary that $n_3 < k-2$. Let $G'=K_2+H_{\mathcal{P}}(l-2,k-2)$, by Claim \ref{claim4}, $G'$ is $C_{k,l}$-free. $G'$ is obtained from $G$ by deleting at most $a_2=e(P_{n_1})+e(P_{n_2})-(l-3+k-3) \leq 0$ edges and adding at least $b_2= \left\lfloor \frac{n-3k+3}{k-2} \right\rfloor (k-3) - \left\lceil \frac{n-({n_1}+{n_2})-2}{n_3} \right\rceil (n_3-1)$ edges. Thus,
\[
\begin{aligned}  
b_2 &\geq \left( \frac{n-3k+3}{k-2}-1 \right)(k-3) - \left( \frac{n-({n_1}+{n_2})-2}{n_3}+1 \right)(n_3-1)\\
&= \frac{(k-2-{n_3})n+(5-4k)(k-3){n_3}+({n_1}+
	{n_2}+2)(n_3-1)(k-2)-n_3(n_3-1)(k-2)}{(k-2)n_3}\\
&\geq \frac{n+(5-4k)(k-3)(k-3)-(k-4)(k-2)(k-3)}{(k-2)(k-3)}\quad \text{(since $n_3 \le k-3$ )}\\
&= \frac{n-5k^3+38k^2-92k+69}{(k-2)(k-3)}.
\end{aligned}
\]
By Lemma \ref{lm2} and the Rayleigh principle, 
\[
\begin{aligned}  
\rho(G')-\rho(G) &\geq \frac{X^T(A(G') - A(G))X}{X^TX}
\geq \frac{{2b_2}\frac{4}{\rho^2}}{X^TX}\\
&\geq \frac{2}{X^TX} \left( \frac{4(n-5k^3+38k^2-92k+69)}{(k-2)(k-3)\rho^2} \right) \\
&>0,
\end{aligned}
\]
where $n > 5k^3-38k^2+92k-69$ and $\rho=\rho(G) \geq \sqrt{2n-4}$. So $\rho(G) < \rho(G')$, a contradiction by the extremality of $G$. 
%\end{Proof}

We next assert that $n_3 = {k-2}$.  Suppose for contradiction  $n_3 \geq k-1$. Thus, $n_1 \geq n_2 \geq n_3 \geq {k-1}$ and $n_2+n_3 \geq {2k-2} \geq l-1$. Obviously, the union of the cycles $u'v_1v_2 \cdots v_{k-1}u'$ and $u'v_{n_1+1}  \cdots v_{n_1+n_2}u''v_{n_1+n_2+1}  \cdots $ $ v_{n_1+l-2}u'$ forms a copy of $C_{k,l}$ in $G$, a contradiction. 
 
We then claim that $n_1 \leq l-2$. Otherwise, suppose that $n_1 \geq l-1$. If $n_2 +n_3 \le k-3$, then $n_3 \le k-3$, a contradiction. If $n_2 +n_3 \ge k-2$, then the union of the cycles $u'v_1v_2 \cdots v_{l-1}u'$ and $u'v_{n_1+1}  \cdots $ $v_{n_1+n_2}u''v_{n_1+n_2+1} \cdots v_{n_1+k-2}u'$ forms a copy of $C_{k,l}$ in $G$, a contradiction. Therefore, $n_1 \le l-2$.

%$P_{n_2}=v_{n_1+1}  \cdots $ $v_{n_1+n_2}$, $P_{n_3}=v_{n_1+n_2+1} \cdots v_{n_1+n_2+n_3}$. 
 %We claim that $n_1 \leq l-2$. Otherwise, suppose that $n_1 \geq l-1$. And $n_1 \leq {l+k-5}$ by Claim \ref{claim3}. Let $P_{n_1}=v_1v_2 \cdots v_{l-1} \cdots v_{n_1}$. We assert that $n_3 \leq n_2 \leq {l+k-4-{n_1}} \leq {k-3}$. If not, then $n_2 \geq {k-2}$. Let $P_{n_2}=u_1u_2 \cdots u_{k-2} \cdots u_{n_2}$. It is straightforward to see that $u'v_1v_2 \cdots v_{l-1}u'$ and $u'u_1u_2 \cdots u_{k-2}u''u'$ are two cycles of length $l$ and $k$, and they intersect  at $u'$. Then $G$ contains $C_{k,l}$, a contradiction. Therefore, $n_3 \leq n_2 \leq k-3$. By Claim \ref{claim6}, $\rho(G) < \rho(K_2 + H_{\mathcal{P}}(l-2,k-2))$, a contradiction.
%Next, we assert that $n_2 \leq k-2$. Suppose to the contrary that $n_1 \geq n_2 \geq k-1$. 
 %If $n_1 + n_3 \le l-3$.  When $l=2k-1$, then $n_3 \le l-3 - n_1 \le k-3$, a contradiction. When $l=2k$, then $n_3 \leq {l-3-{n_1}} \leq {k-2}$. If $n_3=k-2$, then $n_1 \leq {l-3-(k-2)}={k-1}$. Therefore, $n_1=n_2=k-1$. Let $H_2$ be the $(n_1,n_2)$-transformation of $H$. Then $n_1(H_2)=k,n_2(H_2)=n_3(H_2)=k-2$, $K_2+H_2$ is $C_{k,l}$-free and $\rho(K_2+H_2)>\rho(G)$ by Claim \ref{claim4} and Lemma \ref{lm3},  a contradiction. Thus, $n_3 \leq {k-3}$, a contradiction. If $n_1 + n_3 \ge l-2$, then the union of the cycles $u'v_{n_1+1}v_{n_1+2} \cdots v_{n_1+k-1}u'$ and $u'v_1v_2  \cdots v_{n_1}u''v_{n_1+n_2+1}  \cdots $ $ v_{n_2+l-2}u'$ forms a copy of $C_{k,l}$ in $G$, a contradiction. Therefore, $k-2 \ge n_2 \ge n_3=k-2$.
Next, we assert that $n_2 \leq k-2$. Suppose to the contrary that $n_1 \geq n_2 \geq k-1$. We split the argument into two cases based on the value of $n_1 + n_3$.
\begin{casee}
$n_1 + n_3 \leq l-3$.
\end{casee} 
We further separate the two possible values of $l$.
\begin{subcasee}
$l = 2k-1$.
\end{subcasee}
 Clearly, $n_3 \leq l-3 - n_1 \leq  k-3$, a contradiction.
 \begin{subcasee}
 $l = 2k$.
 \end{subcasee}
  We have $ n_3 \leq l-3 - n_1 \leq k-2.$
 If $n_3 = k-2$, then $n_1 \leq l-3 - n_3 = (2k-3) - (k-2) = k-1.$
 Combined with our contradictory assumption $n_1 \geq k-1$, we have $n_1 = n_2 = k-1$. Let $H_2$ be the $(n_1,n_2)$-transformation of $H$. Then $n_1(H_2) = k, n_2(H_2) = n_3(H_2) = k-2.$ By Claim \ref{claim4} and Lemma \ref{lm3}, $K_2 + H_2$ is $C_{k,l}$-free and $\rho(K_2 + H_2) > \rho(G)$. This contradicts the extremality of $G$. If $n_3 \leq k-3$, we also obtain a contradiction. 
 \begin{casee}
$n_1 + n_3 \geq l-2$.
 \end{casee}
Then the union of the cycles $u'v_{n_1+1}v_{n_1+2} \cdots v_{n_1+k-1}u'$ and $u'v_1v_2  \cdots v_{n_1}u''v_{n_1+n_2+1}  \cdots $ $ v_{n_2+l-2}$ $u'$ forms a copy of $C_{k,l}$ in $G$, a contradiction.

 %Next, we assert that $n_2 \leq k-2$. Otherwise, $n_1 \geq n_2 \geq k-1$, this implies both $G[V(P_{n_1}) \cup \{u'\}]$ and $G[V(P_{n_2}) \cup \{u'\}]$ contain $k$-cycle as a subgraph. Thus, $n_2 + n_3 \le n_1 + n_3 \le l-3$. If not, then $n_1 + n_3 \ge n_2 + n_3 \ge l-2$. Let $P_{n_1} = v_1 v_2 \cdots v_{n_1}$, $P_{n_3} = v_{n_1+1} \cdots v_{l-2} \cdots v_{n_1+n_3}$, then $u' v_1 v_2 \cdots v_{n_1} u'' v_{n_1+1} \cdots $ $v_{l-2} u'$ is an $l$-cycle, a contradiction. When $l=2k-1$, then $n_3 \le l-3 - n_1 \le 2k-1-3-(k-1)=k-3$. By Claim \ref{claim6}, $\rho(G) < \rho(K_2 + H_{\mathcal{P}}(l-2,k-2))$, a contradiction. 
 %When $l=2k$, then $n_3 \leq {l-3-{n_1}} \leq {k-2}$. If $n_3=k-2$, then $n_1 \leq {l-3-(k-2)}={k-1}$ and ${k-1} \leq {n_2} \leq {n_1} \leq {k-1}$. Therefore, $n_1=n_2=k-1$. Let $H_2$ be the $(n_1,n_2)$-transformation of $H$. Then $n_1(H_2)=k,n_2(H_2)=n_3(H_2)=k-2$, $K_2+H_2$ is $C_{k,l}$-free and $\rho(K_2+H_2)>\rho(G)$, by Claim \ref{claim4} and Lemma \ref{lm3},  a contradiction. Thus, $n_3 \leq {k-3}$. By Claim \ref{claim6}, $\rho(G) < \rho(K_2 + H_{\mathcal{P}}(l-2,k-2))$, a contradiction. As a consequence, when $l = 2k-1$ or $l = 2k$, it follows that $n_2 \le k-2$. Since $k-2\ge n_2\ge n_3=k-2$, we obtain $n_2=n_3=k-2$.
 
 Now we prove that $n_1+n_2={l+k-4}$. Otherwise, suppose that $n_1+n_2 \leq {l+k-5}$, and then $n_1 \leq {l-3}$ or $n_2 \leq {k-3}$. If $n_1 \leq {l-3}$, then let $H_3$ be the $(n_1,n_q)$-transformation of $H$. Therefore, $n_1(H_3)={n_1}+1 \leq {l-2},n_2(H_3)=n_2 \leq {k-2}$ and $n_3(H_3)=n_3 \leq {k-2}$. Clearly, by Claim \ref{claim4} and Lemma \ref{lm3}, $K_2+H_3$ remains $C_{k,l}$-free and $\rho(K_2+H_3)>\rho(G)$, a contradiction. If $n_2 \leq {k-3}$, then let $H_4$ be the $(n_2,n_q)$-transformation of $H$. Therefore, $n_1(H_4)=\max\{n_1,{n_2}+1\} \leq {l-2},n_2(H_4)=\min\{n_1,{n_2}+1\} \leq {k-2}$ and $n_3(H_4)=n_3 \leq {k-2}$. By Claim \ref{claim4} and Lemma \ref{lm3}, $K_2+H_4$ remains $C_{k,l}$-free and $\rho(K_2+H_4)>\rho(G)$  a contradiction. Thus, $n_1+n_2={l+k-4}$.

Finally, we prove that $n_i =n_2$ for $i \in \{3,4, \ldots ,q-1\}$. Suppose there exists an $i_0=\min\{i|4 \leq i \leq q-1,n_i \leq {{n_2}-1}\}$. Let $H_5$ be an $(n_{i_0},n_q)$-transformation. Thus, $n_1(H_5)=n_1 , n_2(H_5)=\max\{n_2,n_{i_0+1}\}=n_2 $ and $n_3(H_5)=n_3 $.  By Claim \ref{claim4} and Lemma \ref{lm3}, $K_2 + H_5$ is $C_{k,l}$-free and $\rho(K_2+H_5) > \rho(G)$ , a contradiction. Therefore, $G \cong K_2+H_{\mathcal{P}}(l-2,k-2)$.

%\end{Proof}
%\vspace*{2mm}
\begin{claim}\label{claim7} 
For inregers $l \geq {2k+1}$, $n > \max \{\frac{21l^3 - 255l^2 + 979l - 1145}{32} ,\frac{361{\left\lfloor \frac{l-3}{2} \right\rfloor}^2-722{\left\lfloor \frac{l-3}{2} \right\rfloor}-425}{32} ,2.67 \times 9^{17},\frac{187}{2}+9\sqrt{102}, \frac{10}{9}|V(F)|,10.2\times2^{\left\lfloor \frac{l-3}{2} \right\rfloor}+2\} $, then $\rho(G) \leq \rho(K_2+H_{\mathcal{P}}(\left\lceil \frac{l-3}{2} \right\rceil,\left\lfloor \frac{l-3}{2} \right\rfloor))$, with equality if and only if $G \cong K_2+H_{\mathcal{P}}(\left\lceil \frac{l-3}{2} \right\rceil,$ $\left\lfloor \frac{l-3}{2} \right\rfloor)$.
	
\end{claim}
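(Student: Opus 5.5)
The plan follows the proofs of Claims \ref{claim5} and \ref{claim6}. Set $s=\left\lfloor \frac{l-3}{2}\right\rfloor$ and $t=\left\lceil \frac{l-3}{2}\right\rceil$, so that $s+t=l-3$ and $t\le s+1$. Since $l\ge 2k+1$, we have $s\ge k-1$.

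First I check that $G^*:=K_2+H_{\mathcal{P}}(t,s)$ is $C_{k,l}$-free, using Claim \ref{claim4}.
\begin{itemize}
\item Condition (i) holds because $n_1+n_2=l-3\le l+k-4$.
\item Condition (ii) holds because every path has order at most $t<l-1$.
\item Condition (iii) holds because any two paths have total order at most $t+s=l-3<l-2$.
\end{itemize}
As before, $G\cong K_2+H$ with $H=\bigcup_{i=1}^q P_{n_i}$ and $n_1\ge\cdots\ge n_q$.

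\textbf{Step 1: $n_3\ge s$.} This is a counting argument. Suppose $n_3\le s-1$ and compare $G$ with $G'=G^*$. By Claim \ref{claim3}, passing from $G$ to $G'$ deletes at most $a_3=e(P_{n_1})+e(P_{n_2})-(l-5)\le k+1$ edges among the two long paths. It adds at least
\[
b_3\ge\Big(\frac{n-l+1}{s}-1\Big)(s-1)-\Big(\frac{n-n_1-n_2-2}{n_3}+1\Big)(n_3-1)
\]
edges. Using $n_3\le s-1$ and $n_1+n_2\le l+k-4\le \frac{3l}{2}$, this gives $b_3\ge \frac{n-c(l)}{s(s-1)}$ for an explicit cubic $c(l)$.

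By Lemma \ref{lm2}, added edges contribute at least $4/\rho^2$ each and deleted ones at most $5/\rho^2$ each. The Rayleigh quotient then gives $\rho(G')-\rho(G)\ge \frac{2}{X^TX\rho^2}\big(4b_3-5a_3\big)>0$ once $n$ exceeds the stated cubic bounds in $l$ and $s$. This contradicts extremality. I expect this step to be the main technical obstacle, mainly the bookkeeping needed to make the constants match the hypothesis.

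\textbf{Step 2: every pair of paths has total order at most $l-3$.} By Step 1 there are at least three paths of order at least $s\ge k-1$. Take any two paths $P_{n_i},P_{n_j}$ and a third path $P_{n_m}$ with $n_m\ge k-1$. If $n_i+n_j\ge l-2$, then $u'$ together with $k-1$ consecutive vertices of $P_{n_m}$ forms a $C_k$. Also, $u'$, an end segment of $P_{n_i}$, then $u''$, then a segment of $P_{n_j}$ of suitable total length, closes a $C_l$ through $u'$. These two cycles meet only at $u'$, giving a $C_{k,l}$, a contradiction.

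Hence $n_1+n_3\le l-3$ and $n_2+n_3\le l-3$. Combined with $n_3\ge s$, this yields $n_1\le t$. Moreover, if $l$ is even (so $t=s+1$), two paths of order $t$ would sum to $l-2$. So at most one path has order $t$ and all other paths have order at most $s$. In particular $n_2=n_3=s$.

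\textbf{Step 3: reaching the extremal graph by transformations.} If $n_1<t$, apply an $(n_1,n_q)$-transformation. If some $n_i<s$ with $2\le i\le q-1$, take the smallest such $i_0$ and apply an $(n_{i_0},n_q)$-transformation. In either case the new $H'$ still has $n_1(H')\le t$, all other parts of order at most $s$, and pairwise sums at most $l-3$. So $K_2+H'$ is $C_{k,l}$-free by Claim \ref{claim4}, exactly as checked for $G^*$. Lemma \ref{lm3}, valid since $n>10.2\cdot 2^{s}+2$, gives $\rho(K_2+H')>\rho(G)$, a contradiction.

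Therefore $n_1=t$ and $n_i=s$ for $2\le i\le q-1$, that is, $G\cong K_2+H_{\mathcal{P}}(t,s)$. When $t=s$ the roles of $n_1$ and $n_2$ coincide and the same argument applies. Uniqueness of the extremal graph follows from the strict inequalities in each step.
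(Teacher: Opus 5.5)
Your proof is correct and follows the paper's own route: the same edge-counting comparison with $K_2+H_{\mathcal{P}}(t,s)$ via Lemma~\ref{lm2} to force $n_3\ge s$, the same $C_k$-in-$P_{n_3}$ plus $C_l$-through-$u'$-and-$u''$ construction to get $n_1+n_2\le l-3$, and $(\cdot,n_q)$-transformations with Lemma~\ref{lm3} to finish. You also correctly observe that $n_3\ge s$ together with $n_1+n_2\le l-3$ already forces $n_2=n_3=s$ and $n_1\le t$, which makes the paper's closing four-edge-operation construction of $G^*$ unnecessary, since the case $n_2\le s-1$ it treats is already excluded; one bookkeeping fix is that $e(P_{n_1})+e(P_{n_2})=n_1+n_2-2\le l+k-6$ gives $a_3\le k-1\le \frac{l-3}{2}$ rather than $k+1$, and it is this sharper bound that yields the hypothesis constant $\frac{21l^3-255l^2+979l-1145}{32}$.
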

\noindent \emph{Proof.}	  
%\begin{Proof}
We first claim that $n_3 \ge \left\lfloor \frac{l-3}{2} \right\rfloor $. Otherwise, suppose that $n_3 < \left\lfloor \frac{l-3}{2} \right\rfloor $.
%if $n_3 < \left\lfloor \frac{l-3}{2} \right\rfloor $, then $\rho(G) < \rho(K_2+H_{\mathcal{P}}(\left\lceil \frac{l-3}{2} \right\rceil,\left\lfloor \frac{l-3}{2} \right\rfloor))$. 
Let $G'=K_2+H_{\mathcal{P}}(\left\lceil \frac{l-3}{2} \right\rceil,\left\lfloor \frac{l-3}{2} \right\rfloor)$. Note that the longest cycle in $G'$ is $C_{l-1}$, so $G'$ is $C_{k,l}$-free. And $G'$ is obtained from $G$ by deleting at most $a_3=e(P_{n_1})+e(P_{n_2})-l+5 \leq k-1 $ edges and adding at least $b_3= \left\lfloor \frac{n-2-l+3}{\left\lfloor \frac{l-3}{2} \right\rfloor} \right\rfloor (\left\lfloor \frac{l-3}{2} \right\rfloor-1) - \left\lceil \frac{n-({n_1}+{n_2})-2}{n_3} \right\rceil (n_3-1)$ edges. Thus,
\[
\begin{aligned}  
	b_3 &\geq \left(  \frac{n-l+1}{\left\lfloor \frac{l-3}{2} \right\rfloor} -1 \right)(\left\lfloor \frac{l-3}{2} \right\rfloor-1) - \left( \frac{n-({n_1}+{n_2})-2}{n_3}+1 \right)(n_3-1)\\
	&\geq \frac{(\left\lfloor \frac{l-3}{2} \right\rfloor-{n_3})n+(1-l-\left\lfloor \frac{l-3}{2} \right\rfloor)(\left\lfloor \frac{l-3}{2} \right\rfloor -1){n_3}-n_3(n_3-1)\left\lfloor \frac{l-3}{2} \right\rfloor}{\left\lfloor \frac{l-3}{2} \right\rfloor n_3}\\
	&\geq \frac{n+(1-l-\frac{l-3}{2})(\frac{l-3}{2}-1)(\frac{l-5}{2})-(\frac{l-5}{2})(\frac{l-5}{2}-1)(\frac{l-3}{2})}{(\frac{l-3}{2})(\frac{l-5}{2})}\quad \text{(since $n_3 \le \frac{l-5}{2}$)}\\
	&\geq \frac{4n-2l^3+25l^2-98l+115}{(l-3)(l-5)}.
\end{aligned}
\]
By Lemma \ref{lm2} and the Rayleigh principle, 
\[
\begin{aligned}  
	\rho(G')-\rho(G) &\geq \frac{X^T(A(G') - A(G))X}{X^TX}
	\geq \frac{{2b_3}\frac{4}{\rho^2}-{2a_3}\frac{5}{\rho^2}}{X^TX}\\
	&\geq \frac{2}{X^TX} \left( \frac{4(4n-2l^3+25l^2-98l+115)}{(l-3)(l-5)\rho^2}-\frac{5(l-3)}{2\rho^2} \right) \\
	&= \frac{2}{X^TX} \left(\frac{ 32n - 21l^3 + 255l^2 - 979l + 1145}{2(l-3)(l-5)\rho^2} \right)\\
	&>0,
\end{aligned}
\]
where $n > \max \{\frac{21l^3 - 255l^2 + 979l - 1145}{32}, \frac{187}{2}+9\sqrt{102}\} $ and $\rho=\rho(G) \geq \sqrt{2n-4}$. Hence $\rho(G) < \rho(G')$. 
Similarly, establishing $n_3 < \left\lfloor \frac{l-3}{2} \right\rfloor $ yields a contradiction.

We now prove that $n_3 \geq {k-1}$. Suppose for contradiction that $n_3 \leq {k-2}$. Thus $n_3 \leq \frac{l-1}{2}-2 < \left\lfloor \frac{l-3}{2} \right\rfloor$, a contradiction. Thus, $n_1 \geq n_2 \geq n_3 \geq {k-1}$. 
Next, we claim that $n_1+n_2 \leq {l-3}$. Otherwise, suppose that $n_1+n_2 \geq {l-2}$. Then the union of the cycles $u'v_{n_1+n_2+1}v_{n_1+n_2+2} \cdots v_{n_1+n_2+k-1}u'$ and $u'v_1v_2  \cdots v_{n_1}u''v_{n_1+1}  \cdots v_{l-2}u'$ forms a copy of $C_{k,l}$ in $G$, a contradiction. Therefore, $n_1+n_2 \leq {l-3}$, and $n_2 \leq {\left\lfloor \frac{l-3}{2} \right\rfloor}$.

Next, we claim that $n_i =n_2$ for $i \in \{3,4, \ldots ,q-1\}$. Otherwise, suppose there exists an $i_0=\min\{i|3 \leq i \leq q-1,n_i \leq {{n_2}-1}\}$. If $i_0 > 3$,  then let $H_6$ be an $(n_{i_0},n_q)$-transformation of $H$. Thus, $n_1(H_6)=n_1,n_2(H_6)=\max\{n_2,n_{i_0+1}\}=n_2 $ and $n_3(H_6)=n_3$. Since $n_{1}(6)+n_2(6) \le l-3$, the longest cycle in $K_2+H_{6}$ is $C_{l-1}$. Then $K_2+H_{6}$ is $C_{k,l}$-free, and  $\rho(K_2+H_6) > \rho(G)$ by Lemma \ref{lm3}, a contradiction. If $i_0 \le 3$, then let $H_7$ be the $(n_{i_0},n_q)$-transformation of $H$. Thus, $n_{1}(H_7) =n_{1}$, $n_{2}(H_7) = n_{2}$ and $n_{3}(H_7)=\max\{n_{3},n_{i_0}+1\}=n_{3}.$ Using the methods similar to those for $i_0>3$, we can also get a contradiction. If $i_0=3$, let $H_{8}$ be the $(n_3,n_q)$-transformation of $H$. Then $n_{1}(H_{8})=n_{1},n_{2}(H_{8})=\max\{n_{2},n_{3}+1\}=n_{2},n_3(H_{8})={n_3}+1 $. Similarly, we get a contradiction.

Finally, we claim that $n_1+n_2=l-3 $ and $n_1=\left\lceil \frac{l-3}{2} \right\rceil, n_2=\left\lfloor \frac{l-3}{2} \right\rfloor$. If $n_1+n_2 \leq {l-4}$, then let $H_{9}$ be the $(n_1,n_q)$-transformation of $H$. Thus, $n_{1}(H_{9})=n_{1}+1,n_{2}(H_{9})=n_{2},n_3(H_{9})={n_3}$, and $n_1(9)+n_2(9) \leq {l-3}$. Therefore, $K_2+H_{9}$ is $C_{k,l}$-free, and then $\rho(K_2+H_{9}) > \rho(G)$ by Lemma \ref{lm3}, a contradiction. So $n_1+n_2=l-3$.
Next, we assume that $n_2 \leq \left\lfloor \frac{l-3}{2} \right\rfloor-1$, then $n_1 \geq \left\lceil \frac{l-3}{2} \right\rceil+1$. For sufficiently large $n$, let $n > n_1+(n_2+2){n_2}+2$ such that $q > n_2+4$. Then there exists at least $n_2+2$ paths of length $n_2$, denoted by $P^1,P^2, \ldots ,P^{{n_2}+2}$. We construct a new graph $G^*$ from $G$ via the following four edge operations:

 $(1)$ delete the edge $v_{\left\lceil \frac{l-3}{2} \right\rceil}v_{\left\lceil \frac{l-3}{2} \right\rceil+1}$ of $P_{n_1}$ and denote the two resulting paths by $P_{\left\lceil \frac{l-3}{2} \right\rceil}=v_1v_2 \cdots v_{\left\lceil \frac{l-3}{2} \right\rceil}$ and $P_{{n_1}-{\left\lceil \frac{l-3}{2} \right\rceil}}=v_{\left\lceil \frac{l-3}{2} \right\rceil+1}v_{\left\lceil \frac{l-3}{2} \right\rceil+2} \cdots v_{n_1}$;
 
 $(2)$ add an edge between an endpoint of $P^{n_2+2}$ and $v_{\left\lceil \frac{l-3}{2} \right\rceil+1}$ of $P_{n_1-\left\lceil \frac{l-3}{2} \right\rceil}$; 
 
 $(3)$ delete all the edge between each isolated vertex of $P^{n_2+1}$;
 
 $(4)$ add an edge between each isolated vertex of the ${n_2}$ isolated vertices and an endpoint of $P^j$ for each $j \in \{1,2, \ldots ,n_2\}$, respectivety.
	 
Actually, we delete $n_2$ edges from $G$ and add $n_2+1$ new edges. Therefore, $G^* \cong K_2+H^2$, where $H^2=P_{\left\lceil \frac{l-3}{2} \right\rceil} \cup P_{n_2+n_1-\left\lceil \frac{l-3}{2} \right\rceil} \cup n_2P_{n_2+1} \cup (q-n_2-4)P_{n_2} \cup P_{n_q}$. Thus, $n_1(H^2)+n_2(H^2)={l-3}$, and then $G^*$ is $C_{k,l}$-free. By Lemma \ref{lm2} and Rayleigh principle,
\[
\begin{aligned} 
\rho(G^*)-\rho(G) &\geq \frac{X^T(A(G^*) - A(G))X}{X^TX}\\
& \geq \frac{2{(n_2+1)}\frac{4}{\rho^2}-2{n_2}(\frac{4}{\rho^2}+\frac{18}{\rho^3}+\frac{21}{\rho^4})}{X^TX}\\
&\geq \frac{2{(n_2+1)}\frac{4}{\rho^2}-2{n_2}(\frac{4}{\rho^2}+\frac{19}{\rho^3})}{X^TX}\\
&\geq \frac{2}{X^TX} \left( \frac{4}{\rho^2}-\frac{19(\left\lfloor \frac{l-3}{2} \right\rfloor-1)}{\rho^3}\right)\\
&>0,
\end{aligned}
\]
where $n >\max\{\frac{361{\left\lfloor \frac{l-3}{2} \right\rfloor}^2-722{\left\lfloor \frac{l-3}{2} \right\rfloor}-425}{32},\frac{445}{2}\} $ and $\rho=\rho(G) \geq \sqrt{2n-4}$. Hence $\rho(G^*) > \rho(G)$, contradicting the maximality of $\rho(G)$. Therefore, $n_1=\left\lceil \frac{l-3}{2} \right\rceil, n_i=\left\lfloor \frac{l-3}{2} \right\rfloor$ for $i \in \{2,3, \dots , q-1 \}$, and then $G \cong K_2+H_{\mathcal{P}}(\left\lceil \frac{l-3}{2} \right\rceil,$ $\left\lfloor \frac{l-3}{2} \right\rfloor)$.

Combining the conclusions of Claims \ref{claim5}-\ref{claim7}, we complete the proof of Theorem \ref{thm1}.

%\end{Proof}
 \vspace*{2mm}
 \section{Proof of theorem \ref{thm2}}
In this section, based on the proofs for planar graphs, we determine the $\operatorname{SPEX}_{\mathcal{OP}}(n,C_{k,l})$ for $l \ge k \ge 3$ by proving Theorem \ref{thm2}. Before proceeding, we also list several useful lemmas, firstly.
%\vspace*{2mm} 
 \begin{lemma}\cite{W.H. Wang}.\label{lm8}  
 Let $F$ be an outerplanar graph not contained in $K_{1,n-1}$, where
 $n \geq \max\{400, $ $\frac{5}{4} |V(F)|\}$. Among the set of outerplanar graphs on $n$ vertices without $F$, the extremal graph having the maximum spectral radius contains a subgraph $K_{1,n-1}$. 
\end{lemma}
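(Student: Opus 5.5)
The plan is to run the Tait--Tobin style argument in the outerplanar setting. First find a dominant vertex, then show it must be adjacent to everything. Let $G$ be an extremal graph in $\operatorname{SPEX}_{\mathcal{OP}}(n,F)$. We may take $G$ connected, since otherwise adding a bridge between components keeps it outerplanar and $F$-free (a bridge lies on no cycle, and a standard argument handles it) while raising $\rho$. Let $X$ be the Perron vector with $\max_v x_v = x_u = 1$.

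\textbf{Bounds on $\rho$.} Since $K_{1,n-1}$ is outerplanar and $F\not\subseteq K_{1,n-1}$, it is $F$-free, so $\rho:=\rho(G)\ge\sqrt{n-1}$. In the other direction we use the known upper bound for outerplanar graphs, $\rho\le \frac32+\sqrt{n-\frac74}$. It follows from $e(G)\le 2n-3$ together with $K_{2,3}$-freeness: any two vertices have at most two common neighbours. Hence $\rho=\sqrt n+O(1)$.

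\textbf{Structure from the eigen-equations.} Use the second-order equation
\[
\rho^2x_v=d(v)x_v+\sum_{w\ne v}|N(v)\cap N(w)|\,x_w .
\]
Set $L=\{v: x_v\ge \varepsilon\}$ for a small constant $\varepsilon$. Summing the first-order equations and using $e(G)\le 2n-3$ gives $|L|=O(\sqrt n)$, and in fact $O(1)$ after one more iteration. Applying the second-order equation at $u$, and bounding the common-neighbour term by $2\sum_w x_w=O(\sqrt n)$ via the $K_{2,3}$-free property, yields $d(u)\ge n-O(\sqrt n)$. Feeding this back, for every $v\ne u$ we get $|N(v)\cap N(u)|\le 2$, hence $\rho^2 x_v\le d(v)x_v+2+o(\rho)$. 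This gives $x_v=O(1/\sqrt n)$ for all $v\neq u$, and also shows that every vertex of $V\setminus N[u]$ has bounded degree, since its neighbours mostly lie in $N(u)$ and each of them shares at most two neighbours with $u$.

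\textbf{Main obstacle: $u$ is adjacent to everything.} The hardest part is the previous paragraph: getting $d(u)\ge n-o(n)$ together with the uniform smallness $x_v=O(1/\sqrt n)$ off $u$. I would first prove a weak bound $x_v\le c<1$ for $v\ne u$ by applying the $K_{2,3}$-free count to the pair $u,v$, and then bootstrap it.

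Once that is in hand, suppose some $w\notin N(u)$. Form $G'$ by deleting all edges at $w$ and adding $wu$. Then $G'$ is outerplanar, because $w$ becomes a pendant vertex. The Rayleigh quotient changes by at least
\[
2x_w\Bigl(x_u-\sum_{z\in N(w)}x_z\Bigr)\;\ge\;2x_w\bigl(1-O(1)\cdot O(1/\sqrt n)\bigr)>0,
\]
so $\rho(G')>\rho(G)$.

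It remains to show $G'$ is $F$-free. If a copy of $F$ lay in $G'$, it would use $w$ as a leaf hanging off $u$. Since $|V(F)|\le \frac45 n$ and $d_G(u)\ge n-O(\sqrt n)$, some neighbour $w'$ of $u$ in $G$ lies outside that copy. Replacing $w$ by $w'$ gives a copy of $F$ in $G$, a contradiction. Hence $N(u)=V(G)\setminus\{u\}$, so $G$ contains $K_{1,n-1}$. The thresholds $n\ge 400$ and $n\ge\frac54|V(F)|$ are exactly what these estimates and the replacement step require.
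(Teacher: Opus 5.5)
The paper does not prove this lemma; it is imported from the cited outerplanar paper. Your overall route is the standard Tait--Tobin one, and several of your steps are sound:
\begin{itemize}
\item $\rho\ge\sqrt{n-1}$;
\item $n-1-d(u)\le 2\sum_z x_z\le 4e(G)/\rho=O(\sqrt n)$;
\item the switching computation, where the change in $X^TAX$ is exactly $2x_w(1-\rho x_w)$, so what you need is $\rho x_w<1$;
\item the replacement argument showing $G'$ is $F$-free.
\end{itemize}
The gap is the middle step. You flag it yourself as the main obstacle, but you then support it with two incorrect claims.

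First, the inequality $\rho^2x_v\le d(v)x_v+2+o(\rho)$ is false. For $v\in N(u)$, every $z\in N(u)\setminus\{v\}$ has $u$ as a common neighbour with $v$. Hence $\sum_{z\ne u,v}|N(v)\cap N(z)|x_z\ge\rho-x_v$, which is not $o(\rho)$. If your inequality held, any $v\in N(u)$ with $d(v)\le n/2$ would satisfy $x_v=o(1/\rho)$, contradicting $\rho x_v\ge x_u=1$. So $x_v=O(1/\sqrt n)$ is never actually derived.

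Second, the bounded-degree claim is inverted. $K_{2,3}$-freeness gives $|N(w)\cap N(u)|\le 2$, so all but two neighbours of $w\notin N[u]$ lie in $V\setminus N[u]$, a set of size $O(\sqrt n)$. That only yields $d(w)=O(\sqrt n)$. Then $\rho x_w=\sum_{z\in N(w)}x_z$ can be $\Theta(1)$ with an uncontrolled constant, and that is exactly the quantity that must be below $1$.

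Both points can be repaired without any bootstrap to $O(1/\sqrt n)$.
\begin{itemize}
\item Apply your own estimate at an arbitrary $v$: $(n-1-d(v))x_v\le 2\sum_z x_z<8\sqrt n$.
\item So any $v\ne u$ with $x_v\ge\frac14$ has $d(v)>n-1-32\sqrt n$. Together with $d(u)>n-1-8\sqrt n$, this forces $|N(u)\cap N(v)|\ge 3$ for large $n$, contradicting $K_{2,3}$-freeness. Hence $x_z<\frac14$ for all $z\ne u$.
\item Do not switch an arbitrary non-neighbour. Instead take $w$ of degree at most $2$ in the outerplanar graph $G-N[u]$. Then $d_G(w)\le 4$, so $\rho x_w<1$ and the switching works.
\end{itemize}

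Two smaller points remain. Your connectivity argument (adding a bridge) is not justified, since a bridge can create $F$, e.g.\ $F=P_m$. It is also unnecessary: if $w$ lies outside $u$'s component, then $x_w=0$. Strictness still follows, because equality would make $X$ an eigenvector of $A(G')$, yet $(A(G')X)_w=1\ne\rho x_w$. Finally, your closing sentence about the thresholds is unsupported. Constants of this crude type close the argument only for $n$ well above $400$, both for the $K_{2,3}$ step and for $d_G(u)\ge|V(F)|-1$ in the replacement. Reaching $\max\{400,\frac54|V(F)|\}$ requires the sharper bookkeeping of the cited source.
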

%\vspace*{2mm} 
\begin{definition}\cite{W.H. Wang}.\label{de4}
	Let $s_1$ and $s_2$ be two integers with $s_1\geq s_2\geq1$, and let $H=P_{s_1}\cup P_{s_2}\cup H_0$, where $H_0$ is a disjoint union of paths. We say that $H^*$ is an $(s_1,s_2)$-transformation of $H$ if
	$$H^*:=\begin{cases}P_{s_1+1}\cup P_{s_2-1}\cup H_0&\text{if }s_2\geq2,\\P_{s_1+s_2}\cup H_0&\text{if }s_2=1.\end{cases}$$
 Clearly, $H^*$ is a disjoint union of paths, which implies that $K_1 + H^*$ is an outerplanar graph.
 
\end{definition}
%\vspace*{2mm}
\begin{lemma}\cite{yin2026spectral}.\label{lm9}
	Let $G$ be a connected extremal graph with $\textit{spex}_{\mathcal{OP}}(n,F)$, where $F$ is an outerplanar subgraph of $K_1 \vee P_{n-1}$ but not of $K_1 \vee \big((t-1)K_2 \cup (n-2t+1)K_1\big)$ and $n$ is sufficiently large relative to the order of $F$. Let $u\in V(G)$ satisfy $d_G(u)=n-1$. Then for every vertex $v\in N_G(u)$,
	\[
	x_v\in\left[\frac{1}{\rho},\frac{1}{\rho}+\frac{2.04}{\rho^2}\right]
	.\]

\end{lemma}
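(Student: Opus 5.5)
The plan is a direct bootstrapping argument on the eigen-equations, using only the local structure forced by outerplanarity around the dominating vertex $u$.

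\textbf{Step 1 (structure of $G-u$).} Since $d_G(u)=n-1$, every vertex of $G-u$ is adjacent to $u$. I will show that $G[N_G(u)]$ is a disjoint union of paths, exactly as in Claim~\ref{claim1} but with outerplanar obstructions.
\begin{itemize}
\item If $G-u$ contained a cycle, then that cycle together with $u$ would contain a $K_4$-minor.
\item If some $v\neq u$ had three neighbours $a,b,c$ in $G-u$, then $\{u,v\}$ versus $\{a,b,c\}$ would form a $K_{2,3}$.
\end{itemize}
Both configurations contradict outerplanarity. Hence every $v\in N_G(u)$ satisfies $d_G(v)\le 3$: it is adjacent to $u$ and to at most two further vertices $a,b\in N_G(u)$. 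Also $K_{1,n-1}\subseteq G$ gives $\rho\ge\sqrt{n-1}$, which tends to infinity with $n$. The hypothesis on $F$ and the largeness of $n$ are used only through Lemma~\ref{lm8}, to guarantee that $G$ is connected and contains such a $u$.

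\textbf{Step 2 ($x_u$ is the maximum entry).} Normalize $\max_w x_w=1$. For any $w\neq u$ we have
\[
\rho x_w=x_u+x_a+x_b\le 3,
\]
so $x_w\le 3/\rho<1$ once $\rho>3$. Therefore the maximum entry $1$ must be attained at $u$, and $x_u=1$.

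\textbf{Step 3 (two-sided bounds).} The lower bound is immediate: for $v\in N_G(u)$,
\[
\rho x_v=1+\sum_{w\in N(v)\setminus\{u\}}x_w\ge 1,
\]
so $x_v\ge 1/\rho$.

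For the upper bound I iterate the same equation twice. The first pass, using $x_w\le 3/\rho$ for all $w\neq u$, gives
\[
x_v\le \frac{1}{\rho}+\frac{6}{\rho^2}\quad\text{for all } v\neq u.
\]
Substituting this improved bound back into $\rho x_v=1+x_a+x_b$ yields
\[
x_v\le \frac{1}{\rho}+\frac{2}{\rho^2}+\frac{12}{\rho^3}.
\]
This is at most $\frac1\rho+\frac{2.04}{\rho^2}$ as soon as $12/\rho\le 0.04$, i.e.\ $\rho\ge 300$. That holds once $n-1\ge 90000$, which is covered by the assumption that $n$ is sufficiently large.

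The only genuinely non-routine step is Step 1, certifying that $G-u$ is a linear forest so that the degree bound $d_G(v)\le 3$ is available. Everything after that is a two-round bootstrap, and the constant $2.04$ is simply the slack $2+12/\rho$ for large $\rho$.
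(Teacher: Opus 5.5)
Your argument is correct. The paper gives no proof of this lemma; it quotes it from \cite{yin2026spectral}, so there is no in-paper argument to compare with. Your derivation is self-contained and proves more than the statement. It never uses extremality or any property of $F$, only that $G$ is outerplanar with a dominating vertex $u$, which forces $G-u$ to be a linear forest and $\rho\ge\sqrt{n-1}$. You also do not need Lemma~\ref{lm8}, since connectedness and the existence of $u$ are hypotheses of the statement. Contrary to your closing remark, the largeness of $n$ is used directly: your two rounds need $\rho\ge 300$, i.e.\ $n\ge 90001$, which ``sufficiently large'' permits. You can collapse the bootstrap into a single step. If $w\ne u$ maximizes $x_w$ over $V(G)\setminus\{u\}$, then $\rho x_w\le 1+2x_w$, so $x_v\le \frac{1}{\rho-2}=\frac{1}{\rho}+\frac{2}{\rho(\rho-2)}$ for every $v\ne u$. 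This already gives the constant $2.04$ once $\rho\ge 102$, and it is essentially sharp for this class, since interior vertices of long paths of $G-u$ nearly attain it. Be aware that neither version covers the explicit range $n>400$ in which Theorem~\ref{thm2} invokes the lemma, where only $\rho\ge\sqrt{n-1}\ge 20$ is guaranteed. What Claim~\ref{claim10} actually uses is your lower bound $1/\rho$ together with $x_vx_w\le 2/\rho^2$ on deleted edges, and the latter follows from $x_v\le 1/(\rho-2)$ as soon as $\rho\ge 7$.
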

%\vspace*{2mm}
\begin{lemma}\cite{W.H. Wang}.\label{lm10}
	Let $H$ and $H^*$ be the two graphs as shown in Definition \ref{de4}.
	If $n \ge 5 \times 2^{s_2+2} + 1$, then $\rho(K_{1} + H^*) > \rho(K_{1} + H)$.
	
\end{lemma}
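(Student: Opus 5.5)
For $s_2=1$ the statement needs no computation. Here $H^*$ is obtained from $H$ by adding one edge joining the isolated vertex $P_{1}$ to an end of $P_{s_1}$. Hence $K_1+H$ is a proper spanning subgraph of the connected graph $K_1+H^*$, and Lemma \ref{lm4} gives $\rho(K_1+H^*)>\rho(K_1+H)$. So I will assume $s_2\ge 2$. The plan is to reduce the comparison to a one-variable inequality about paths, rather than to compare Perron entries directly. A direct edge-switching argument via the Rayleigh quotient is awkward: the vertex losing a neighbour sits inside $P_{s_2}$, where the Perron entries exceed those at the end of $P_{s_1}$.

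\textbf{Step 1 (secular equation).} Let $u$ be the dominating vertex and $\rho=\rho(K_1+H)$, with Perron vector $X$ normalized so that $x_u=1$. The eigen-equations on $V(H)$ read $(\rho I-A(H))X|_{V(H)}=\mathbf 1$. Since $\rho>\rho(H)\le 2$, this gives $X|_{V(H)}=(\rho I-A(H))^{-1}\mathbf 1$. The equation at $u$ becomes $\rho=\sum_{v\in V(H)}x_v$. Define, for a path $P_s$,
\[
f_s(t):=\mathbf 1^{T}(tI-A(P_s))^{-1}\mathbf 1 \qquad (t>2).
\]
Then $\rho$ is the largest root of $\Phi_H(t):=t-\sum_{P_s\subseteq H}f_s(t)=0$. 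Each $f_s$ is positive and strictly decreasing on $(2,\infty)$, being a sum of $\sum_{j\ge0}w_j(P_s)t^{-j-1}$ with walk counts $w_j\ge0$. Hence $\Phi_H$ is strictly increasing there, and $\Phi_H(t)<0$ on $(2,\rho)$. The same holds for $H^*$. Therefore it suffices to show
\[
\Phi_{H^*}(\rho)<0=\Phi_H(\rho), \quad\text{i.e.}\quad f_{s_1+1}(\rho)+f_{s_2-1}(\rho)>f_{s_1}(\rho)+f_{s_2}(\rho),
\]
because this forces the root of $\Phi_{H^*}$ to lie beyond $\rho$.

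\textbf{Step 2 (closed form for $f_s$).} Write $t=\lambda+\lambda^{-1}$ with $\lambda>1$. Solve the tridiagonal system $ty_i-y_{i-1}-y_{i+1}=1$ with $y_0=y_{s+1}=0$ explicitly:
\[
y_i=\frac{1}{t-2}\Bigl(1-\frac{\lambda^{i}+\lambda^{s+1-i}}{1+\lambda^{s+1}}\Bigr).
\]
Summing gives $f_s=\frac{s}{t-2}-\frac{2}{(t-2)(\lambda-1)}\cdot\frac{\lambda^{s+1}-\lambda}{\lambda^{s+1}+1}$. Consequently the increment $g(s):=f_{s+1}-f_s$ equals $\frac1{t-2}$ minus a correction. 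That correction is a fixed multiple of $\frac{\lambda^{s+1}(\lambda-1)}{(\lambda^{s+2}+1)(\lambda^{s+1}+1)}$, which is of order $\lambda^{-s}$. The required inequality is exactly $g(s_1)>g(s_2-1)$ with $s_1\ge s_2>s_2-1$. So it reduces to showing that this correction is strictly decreasing in $s$ for all $s\ge s_2-1$.

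\textbf{Step 3 (main obstacle).} For large $\lambda$ the monotonicity of the correction in $s$ is clear, since each step multiplies it by roughly $\lambda^{-1}$. The real work is making this uniform and effective. I expect this to be the hardest step. I would first bound $\rho\ge\sqrt{n-1}$, using $K_{1,n-1}\subseteq K_1+H$, so that $\lambda\ge\rho-1$ is large. Next I would check that the ratio of consecutive corrections is at most $\frac{\lambda^{s+1}+1}{\lambda^{s+3}+1}\cdot\lambda<1$. If the constants misbehave, I would fall back on expanding in walks. Write $f_{s+1}+f_{s_2-1}-f_s-f_{s_2}=\sum_j\bigl(w_j(P_{s_1+1})+w_j(P_{s_2-1})-w_j(P_{s_1})-w_j(P_{s_2})\bigr)t^{-j-1}$. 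The coefficients vanish for $j\le 2$ or so. The first nonzero one, around $j=s_2$ where walks in $P_{s_2}$ first feel both ends, is positive. The tail can then be bounded by $\sum_{j>s_2}2^{j}\cdot s\cdot t^{-j-1}$. The hypothesis $n\ge 5\cdot2^{s_2+2}+1$ is exactly what makes $t^{s_2}$, hence $\rho^2\approx n$, dominate the $2^{s_2}$-type factors in this tail. Once $\Phi_{H^*}(\rho)<0$ is established, Step 1 gives $\rho(K_1+H^*)>\rho(K_1+H)$.
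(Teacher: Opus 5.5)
Your proposal is correct, and it takes a genuinely different route from the paper. The paper gives no proof of Lemma \ref{lm10}; it imports the lemma from \cite{W.H. Wang}.

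Your Step 3 is not actually an obstacle. With $t=\lambda+\lambda^{-1}$, your closed form gives exactly
\[
g(s)=\frac{1}{t-2}-\frac{2(\lambda+1)}{t-2}\cdot\frac{\lambda^{s+1}}{(\lambda^{s+1}+1)(\lambda^{s+2}+1)}.
\]
The ratio of consecutive corrections is exactly (not merely at most) $\lambda\frac{\lambda^{s+1}+1}{\lambda^{s+3}+1}=\frac{\lambda^{s+2}+\lambda}{\lambda^{s+3}+1}$. Moreover,
\[
(\lambda^{s+3}+1)-(\lambda^{s+2}+\lambda)=(\lambda-1)(\lambda^{s+2}-1)>0
\]
for every $\lambda>1$ and $s\ge 0$. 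Hence $g(s_1)>g(s_2-1)$ holds with no largeness of $\lambda$ at all.

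The size of $n$ enters only to guarantee $\rho\ge\sqrt{n-1}>2$. You need this, rather than just $\rho>\rho(H)$, so that $\lambda>1$ is real and $\rho$ lies in the domain where you defined $f_s$. You can therefore drop the walk-expansion fallback, which as written is only heuristic. For instance, when $s_2=2$ the coefficient at $j=2$ is already nonzero. You can also drop the speculation that $n\ge 5\cdot 2^{s_2+2}+1$ is "exactly" what the argument requires.

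Compared with the cited source, the difference is in method. The exponential hypothesis there points to an approximate argument in the spirit of the rest of the paper: bounds on Perron entries such as Lemma \ref{lm9}, or walk counts bounded by powers of $2$ and absorbed by $\rho^2\approx n$. You instead use the exact secular equation $t=\sum_s f_s(t)$ together with the explicit solution of the tridiagonal system on a path. This buys a stronger statement: the $(s_1,s_2)$-transformation increases $\rho(K_1+H)$ whenever $\rho(K_1+H)>2$, for example for all $n\ge 6$.

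Your argument also transfers essentially verbatim to the planar analogue, Lemma \ref{lm3}. There $x_{u'}=x_{u''}$ by symmetry, and $\rho(K_2+H)$ is the unique root in $(2,\infty)$ of $t-1-2\sum_s f_s(t)$. This reduces Lemma \ref{lm3} to the same inequality $f_{s_1+1}+f_{s_2-1}>f_{s_1}+f_{s_2}$.

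What the estimate-based approach offers in return is uniformity with the eigenvector-estimate machinery used throughout the paper. Your approach relies on the components being paths, where a closed form is available, but it is exact and removes the dependence on $n$.
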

%\vspace*{2mm}
%\begin{lemma}\cite{yin2026spectral}.\label{lm11}
%If G is a wheel graph of order $n$, denoted by $W_n$, suppose that $Y = (y_1, y_2, \ldots, y_n)^T$ is the normalized Perron vector of $W_n$, where $y_1$ corresponds to the vertex of degree $n-1.$ then,
 %$$\rho(W_n) = 1 + \sqrt{n} \quad , \quad y_2^2 = \frac{1}{2(n - \sqrt{n})}.$$

%\end{lemma}	
%\vspace*{2mm}	
\noindent \emph{\textbf{Proof of Theorem \ref{thm2}}.}
% \begin{proof}[\textbf{Proof of Theorem \ref{thm2}}]
	If $G'$ is a connected graph with order $n$, by Perron-Frobenius theorem, then there exists a positive eigenvector $X=(x_1,...,x_n)^T$ corresponding to $\rho(G')$.
	Now let $G'$ be an extremal graph to $\textit{spex}_\mathcal{OP}(n,C_{k,l})$. Noting that $C_{k,l}$ is an outerplanar graph not contained in $K_{1,n-1}$, where
	$n \geq \max\{400, \frac{5}{4} |V(F)|\}$. Then $G'$ contains a copy of $K_{1,n-1}$ by Lemma \ref{lm8}.
	For convenience, we now normalize $X$ such that its maximum entry is 1, and then choose $u \in V(G')$ such that $x_{u}=\max\{x_i|i=1,2,...,n\}=1$. Now, we are ready to give a proof of Theorem \ref{thm2}.
%\end{proof}

\begin{claim}\label{claim8}
	$G'[A]$ is a union of disjoint induced paths.
\end{claim}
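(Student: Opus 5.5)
Here $A$ is presumably $N_{G'}(u)=V(G')\setminus\{u\}$, where $u$ is the vertex of degree $n-1$ supplied by Lemma \ref{lm8} (the one with $x_u=1$). The plan is to show two things: $G'[A]$ is acyclic, and every vertex of $A$ has at most two neighbours in $A$. An acyclic graph with maximum degree at most $2$ is a disjoint union of paths. Each component is then automatically an induced path of $G'[A]$, because any chord would create a cycle.

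\emph{Degree bound.} Suppose some $v\in A$ has three neighbours $w_1,w_2,w_3\in A$. Then $u$ and $v$ are both adjacent to all of $w_1,w_2,w_3$, and $uv\in E(G')$. So $G'[\{u,v,w_1,w_2,w_3\}]$ contains $K_{2,3}$ (indeed $K_1+K_{1,3}$). Outerplanar graphs have no $K_{2,3}$-minor, so this contradicts the outerplanarity of $G'$. Hence $d_A(v)\le 2$ for all $v\in A$.

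\emph{No cycles.} Suppose $G'[A]$ contains a cycle $C_m$ with $m\ge 3$. Then $u$ together with $V(C_m)$ contains a $K_4$-minor: contract $C_m$ to a triangle and add $u$, which is adjacent to all of it. Outerplanar graphs are $K_4$-minor-free, so this contradicts outerplanarity. Alternatively, mirroring Claim \ref{claim1}, if $m\ge l+k-1$ one can also build $C_{k,l}$ at $u$ from two disjoint subpaths of $C_m$. The minor argument is cleaner and needs no length assumption, so I would use it.

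The combination step is immediate, and I expect no serious obstacle. The only point needing care is to state clearly which forbidden-minor characterization of outerplanarity is invoked: no $K_4$- and no $K_{2,3}$-minor (Chartrand--Harary). I would also record that, since every path component is induced, $G'\cong K_1+G'[A]$ has the form $K_1+H$ with $H$ a disjoint union of paths. This is the setting needed for the later transformation arguments via Lemma \ref{lm10}.
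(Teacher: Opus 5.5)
Your proof is correct and follows the paper's argument: a forbidden-subgraph obstruction gives the degree bound, and contracting a cycle of $G'[A]$ to a triangle gives a $K_4$-minor together with $u$. Your degree step is in fact the accurate one: with a single apex $u$, the vertices $u,v$ and $w_1,w_2,w_3$ give $K_{2,3}$, which is excluded for outerplanar graphs, whereas the paper's text claims a $K_{3,3}$, which does not follow from only one apex vertex.
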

\noindent \emph{Proof.}	  
Similar to the proof in planar graphs. We first show that $d_{A}(v) \le 2$ for every vertex $v \in A$. If not, then $G'[A \cup \{u\}]$ contains a $K_{3,3}$, a contradiction. Next we assert that there is no cycle in $G'[A]$. If not, suppose there exists a cycle in $G'[A]$. Then we can contract this cycle into a triangle, which implies that $G'$ contains a $K_4$-minor, yielding a contradiction. Thus, $G'[A]$ is a union of disjoint induced paths.
Therefore, $G' \cong K_1+H'$, where $H' \cong G'[A] \cong \bigcup_{i=1}^{t} P_{n_i'}$ with $n_1' \geq n_2' \geq \ldots \geq n_{t}'$ and $t \ge 2$. For convenience, let $P_{n_1'}=v_1v_2 \cdots v_{n_1'}$, $P_{n_2'}=v_{n_1'+1}  \cdots $ $v_{n_1'+n_2'}$ and $P_{n_3'}=v_{n_1'+n_2'+1} \cdots v_{n_1'+n_2'+n_3'}$.

\begin{claim}\label{claim9}
	
$n_2' \leq {l-2}$ and $n_1' \leq {l+k-3}$.
\end{claim}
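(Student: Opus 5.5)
Both bounds will be proved by contradiction: in each case we exhibit an explicit copy of $C_{k,l}$ in $G'\cong K_1+H'$, using only the hub $u$ and the paths $P_{n_1'}$ and $P_{n_2'}$. The basic observation is that every subpath $v_iv_{i+1}\cdots v_j$ of a path of $H'$ with $j-i+1=m$ vertices gives the cycle $uv_iv_{i+1}\cdots v_ju$ of length $m+1$ through $u$. Consequently, two vertex-disjoint subpaths of $H'$ with $l-1$ and $k-1$ vertices give cycles $C_l$ and $C_k$ whose only common vertex is $u$, that is, a copy of $C_{k,l}$. It therefore suffices to find such a pair of disjoint subpaths.

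For $n_1'\le l+k-3$, suppose to the contrary that $n_1'\ge l+k-2$. Split $P_{n_1'}=v_1\cdots v_{n_1'}$ into the two disjoint subpaths $v_1\cdots v_{l-1}$ and $v_l\cdots v_{l+k-2}$. These have $l-1$ and $k-1$ vertices, and all indices lie within the path since $l+k-2\le n_1'$. The cycles $uv_1\cdots v_{l-1}u$ and $uv_l\cdots v_{l+k-2}u$ then form a $C_{k,l}$ in $G'$, a contradiction.

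For $n_2'\le l-2$, suppose to the contrary that $n_2'\ge l-1$. Then $n_1'\ge n_2'\ge l-1>k-1$. Take $C_l=uv_{n_1'+1}\cdots v_{n_1'+l-1}u$ inside $P_{n_2'}$ and $C_k=uv_1\cdots v_{k-1}u$ inside $P_{n_1'}$. The two paths are disjoint, so the cycles share only $u$, which again gives a copy of $C_{k,l}$, a contradiction.

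Both cases rely on the structural description $G'\cong K_1+H'$ from Claim~\ref{claim8}, specifically that $u$ is adjacent to every vertex of $A$. The only point needing care is the index bookkeeping: the cycle built from a subpath with $m$ vertices has length $m+1$, so we need subpaths with exactly $l-1$ and $k-1$ vertices, and these must fit inside the available path lengths, which is exactly what the assumed lower bounds guarantee. No spectral estimates are needed for this claim; the spectral comparisons via Lemma~\ref{lm9} and Lemma~\ref{lm10} enter only in the subsequent steps of the proof of Theorem~\ref{thm2}.
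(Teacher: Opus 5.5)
Your proof is correct and matches the paper's argument: both bounds follow by exhibiting two cycles through the hub $u$ built from disjoint subpaths with $l-1$ and $k-1$ vertices. The only cosmetic difference is that for $n_2'\le l-2$ you place $C_l$ in $P_{n_2'}$ and $C_k$ in $P_{n_1'}$, whereas the paper does the reverse; both choices work since $n_1'\ge n_2'\ge l-1>k-1$.
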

\noindent \emph{Proof.}	  
%\begin{proof}
We first assume that $n_2' \geq {l-1}$. Then the union of the cycles $uv_1v_2 \cdots v_{l-1}u$ and $uv_{n_1'+1}v_{n_1'+2}$ $ \cdots v_{n_1'+k-1}u$ forms a copy of $C_{k,l}$ in $G$, a contradiction. Hence, $n_2' \leq l-2$. Next, we assume that $n_1' \geq {l+k-2}$. Then the union of the cycles $uv_1v_2 \cdots v_{l-1}u$ and $uv_lv_{l+1} \cdots v_{l+k-2}u$  also forms a copy of $C_{k,l}$ in $G$, a contradiction. Hence, $n_1' \leq {l+k-3}$.
%\end{proof}
\begin{claim}\label{claim10}
$n_i' ={l-2}$ for $i \in \{1,2, \ldots ,t-1\}$.
\end{claim}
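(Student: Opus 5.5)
The plan is to first show that $n_1'\le l-2$, and then to push every path except the last one up to order exactly $l-2$ using Lemma \ref{lm10}. The tool throughout is the following observation. Every cycle of $K_1+H'$ passes through $u$, because $H'$ is a forest. Such a cycle consists of $u$ together with a subpath of a single component of $H'$. Hence, if every component of $H'$ has order at most $l-2$, every cycle has length at most $l-1$, and $K_1+H'$ is $C_l$-free and therefore $C_{k,l}$-free. In particular, $K_1+H_{\mathcal{OP}}(l-2,l-2)$ is $C_{k,l}$-free and outerplanar.

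\textbf{Step 1: ruling out $n_1'\ge l-1$.} Suppose $n_1'\ge l-1$. Then $uv_1\cdots v_{l-1}u$ is a copy of $C_l$. If $n_2'\ge k-1$, then $P_{n_2'}$ together with $u$ contains a $C_k$ meeting the first cycle only in $u$, a contradiction. So $n_2'\le k-2$, and hence $n_i'\le k-2$ for all $i\ge 2$. Claim \ref{claim9} also gives $n_1'\le l+k-3$.

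Now compare $G'$ with $G''=K_1+H_{\mathcal{OP}}(l-2,l-2)$, counting edges of $H'$ and of $H_{\mathcal{OP}}(l-2,l-2)$ as in Claims \ref{claim5}--\ref{claim7}.
\begin{itemize}
\item Passing from $G'$ to $G''$ deletes at most $a\le n_1'-(l-2)\le k-1$ edges, namely those on the long first path.
\item It adds at least $b$ edges, where $b\ge \big(\tfrac{n-1-(l-2)}{l-2}-1\big)(l-3)-\big(\tfrac{n-1-n_1'}{k-2}+1\big)(k-3)$.
\item This gives roughly $b\ge n\big(\tfrac{1}{k-2}-\tfrac{1}{l-2}\big)-O(l)=\tfrac{(l-k)n}{(k-2)(l-2)}-O(l)$, which grows linearly in $n$ because $l>k$.
\end{itemize}

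By Lemma \ref{lm9}, $x_v\in[\frac1\rho,\frac1\rho+\frac{2.04}{\rho^2}]$ for every $v\ne u$. The Rayleigh quotient then gives
\[
\rho(G'')-\rho(G')\ \ge\ \frac{2}{X^TX}\Big(\frac{b}{\rho^2}-a\big(\tfrac1\rho+\tfrac{2.04}{\rho^2}\big)^2\Big).
\]
This is positive once $b$ exceeds a constant multiple of $k$, which is exactly where the hypothesis $n>\frac{(k-2)(2l^2-14l+3kl-6k+19)}{l-k}$ enters. The conclusion $\rho(G'')>\rho(G')$ contradicts extremality. Hence $n_1'\le l-2$.

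\textbf{Step 2: equalizing the paths.} Now every component of $H'$ has order at most $l-2$. Suppose some $i\le t-1$ has $n_i'<l-2$, and let $i_0$ be the least such index. Apply the $(n_{i_0}',n_t')$-transformation of Definition \ref{de4}. The new graph still has all components of order at most $l-2$, since $n_{i_0}'+1\le l-2$, and a merged path when $n_t'=1$ also has order at most $l-2$. By the observation above it is $C_{k,l}$-free and outerplanar. Lemma \ref{lm10} applies with $s_2=n_t'\le l-2$, since $n>5\times 2^{l}+1\ge 5\times2^{s_2+2}+1$. It gives a strictly larger spectral radius, a contradiction. Therefore $n_i'=l-2$ for all $i\in\{1,\dots,t-1\}$.

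\textbf{Main obstacle.} I expect Step 1's counting to be the difficult part. The number of paths of order at most $k-2$ must be tracked carefully, including the remainder path and the floors and ceilings. The loss term uses $(\frac1\rho+\frac{2.04}{\rho^2})^2$ rather than $\frac{1}{\rho^2}$, so the lower-order $\rho^{-3}$ terms have to be absorbed. I would bound these crudely using $\rho\ge\sqrt{n-1}$ and $n\ge 400$, so that the net inequality reduces to a linear condition in $n$ of the stated form.
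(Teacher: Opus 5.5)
Your outline follows the paper's proof essentially step for step. A long first path gives a $C_l$ through $u$, which forces $n_2'\le k-2$. You then compare $G'$ with $K_1+H_{\mathcal{OP}}(l-2,l-2)$ by a Rayleigh-quotient edge count based on Lemma~\ref{lm9}, and finally equalize the paths with Lemma~\ref{lm10}. Your check that $n>5\times 2^{l}+1$ covers every $s_2\le l-2$ is correct, and Step~2 is fine.

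The step that fails as written is the edge accounting in Step~1, which you inherit from the paper. It is not true that $G''$ arises from $G'$ by deleting only the at most $k-1$ edges of the first path and otherwise only adding edges. That would require every short component of $H'$ to lie inside a component of $H_{\mathcal{OP}}(l-2,l-2)$. Take $k=5$, $l=6$ and $H'=P_{n_1'}\cup mP_3$, so $m\approx (n-1)/3$. Each $P_4$ contains at most one of the vertex-disjoint $P_3$'s, and $H_{\mathcal{OP}}(4,4)$ has only $\lceil (n-1)/4\rceil$ components. Hence about $n/12$ edges of the short paths must also be deleted. Your $b$ is therefore only the net gain in edges, not a count of added edges. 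As a result, the inequality $\frac{b}{\rho^2}-a\big(\frac1\rho+\frac{2.04}{\rho^2}\big)^2$ with $a\le k-1$ is not justified.

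The repair needs no clever labelling: bound all old edges at once. Because the edges at $u$ cancel,
\[
X^T\big(A(G'')-A(G')\big)X\;\ge\;\frac{2e(H'')}{\rho^2}-2e(H')\Big(\frac{1}{\rho}+\frac{2.04}{\rho^2}\Big)^2\;\ge\;\frac{2}{\rho^2}\Big(e(H'')-e(H')-\frac{n}{\rho}\Big(4.08+\frac{4.17}{\rho}\Big)\Big).
\]
Let $t$ and $t''$ be the numbers of components of $H'$ and $H''$. Then $e(H'')-e(H')=t-t''$. Using $n_2'\le k-2$ and $n_1'\le l+k-3$,
\[
t-t''\;\ge\;\frac{(l-k)(n-1)}{(k-2)(l-2)}-\frac{l+k-3}{k-2}.
\]
Since $\rho\ge\sqrt{n-1}$, the subtracted term is $O(\sqrt n)$ while $t-t''$ is linear in $n$. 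So the contradiction still holds for large $n$. However, the condition that comes out is roughly $\sqrt n\gtrsim 4.1(k-2)(l-2)/(l-k)$, a quadratic-type requirement rather than only the linear bound $\frac{(k-2)(2l^2-14l+3kl-6k+19)}{l-k}$ that your last paragraph (and the paper) aims for.
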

\noindent \emph{Proof.}	  
%\begin{proof}
We first suppose that $n_1' \geq {l-1}$. By Claim \ref{claim9}, ${l-1} \leq {n_1'} \leq {l+k-3}$. If $n_2' \geq {k-1}$, then the union of the cycles $uv_1v_2 \cdots v_{l-1}u$ and $uv_{n_1'+1}v_{n_1'+2} \cdots $ $v_{n_1'+k-1}u$ forms a copy of $C_{k,l}$ in $G$, a contradiction. If $n_2' \leq {k-2}$, then let $G'' \cong K_1+H_{\mathcal{OP}}(l-2,l-2)$ is obtained from $G'$ by deleting at most $a'={l+k-4-l+3}={k-1}$ edges and adding at least $b'= \left\lfloor \frac{n-l+2-1}{l-2} \right\rfloor (l-3) - \left\lceil \frac{n-{n_1'}-1}{n_2'} \right\rceil (n_2'-1)$  edges.
It follows that
\[
\begin{aligned}
b'&\geq \left(\frac{n-l+2-1}{l-2}-1\right)(l-3) -  \left(\frac{n-{n_1'}-1}{n_2'}+1\right) (n_2'-1)\\
&= \frac{(l-2-{n_2}')n+(3-2l)(l-3){n_2}'+({n_1}'+1)({n_2}'-1)(l-2)-{n_2}'({n_2}'-1)(l-2)}{(l-2)(k-2)}\\
&\geq \frac{(l-k)n+(3-2l)(l-3)(k-2)-(k-2)(k-3)(l-2)}{(l-2)(k-2)}\quad \text{(since $n_2' \le k-2$)}.
\end{aligned}
\]
By Lemma \ref{lm9} and the Rayleigh principle, 
\[
\begin{aligned}
\rho(G'')-\rho(G') &\geq \frac{‘X^T(A(G'') - A(G'))X}{X^TX}
\geq \frac{{2b'}\frac{1}{\rho^2}-{2a'}\frac{2}{\rho^2}}{X^TX}\\
&\geq \frac{2}{X^TX}\left(\frac{(l-k)n+(3-2l)(l-3)(k-2)-(k-2)(k-3)(l-2)}{(l-2)(k-2)\rho'^2}-\frac{2(k-1)}{\rho'^2} \right) \\
&= \frac{2}{X^TX} \left(\frac{(l-k)n - (k-2)(-2l^2+14l-3kl+6k-19)}{(l-2)(k-2)\rho'^2}
\ \right)\\
&>0,
\end{aligned}
\]
where $n > \max \{ \frac{(k-2)(2l^2 - 14l + 3kl - 6k + 19)}{l - k},26\} $ and $\rho'=\rho(G') \geq \sqrt{n-1}$. So $\rho(G'') > \rho(G')$, contradicting the maximality of $\rho(G')$.
Therefore, $n_1' \leq l-2$. Next, we claim that $n_i'=l-2$ for $i \in \{1,2, \dots , t-1\}$. Suppose there exists an $i_0=\min\{i|1 \leq i \leq t-1,n_i' \leq {l-3}\}$. Let $H'$ be an $(n_{i_0}',n_t')$-transformation. Thus, $n_3'(H') \leq n_2'(H') \leq n_1'(H')=\max\{n_1',n_{i_0}'+1\} \leq {l-2}$, and the longest cycle in $K_1+H'$ is $C_{l-1}$. Therefore, $K_1+H'$ is $C_{k,l}$-free and $\rho(K_1+H')>\rho(G')$ by Lemma \ref{lm10}, a contradiction. Thus, $n_i' ={l-2}$ for $i \in \{1,2, \ldots ,t-1\}$, and $G \cong K_1 + H_{\mathcal{OP}}(l-2,l-2).$
%\end{proof}

Combining the conclusions of Claims \ref{claim8}-\ref{claim10}, we complete the proof of Theorem \ref{thm2}.

\section{Acknowledgement}
We would like to show our great gratitude to anonymous referees for their valuable suggestions which greatly improved the quality of this paper.

\end{document}